\newcommand{\R}{\mathbb{R}}
\newcommand{\E}{\mathbb{E}}
\newcommand{\Z}{\mathbb{Z}}
\newcommand{\kD}{\mathcal{D}}
\newcommand{\kA}{\mathcal{A}}
\newcommand{\kB}{\mathcal{B}}
\newcommand{\kC}{\mathcal{C}}
\newcommand{\kO}{\mathcal{O}}
\newcommand{\kG}{\mathcal{G}}
\newcommand{\kM}{\mathcal{M}}
\newcommand{\kH}{\mathcal{H}}
\newcommand{\kU}{\mathcal{U}}
\newcommand{\en}{\mathbb{E} }
\newcommand{\atanh}{\textrm{atanh}}
\newcommand{\lin}{\left[\kern-0.15em\left[}
\newcommand{\rin} {\right]\kern-0.15em\right]}
\newcommand{\linf}{[\kern-0.15em [}
\newcommand{\rinf} {]\kern-0.15em ]}
\newcommand{\ilin}{\left]\kern-0.15em\left]}
\newcommand{\irin} {\right[\kern-0.15em\right[}
\newtheorem{lem}{Lemma}[section]
\newtheorem{prop}[lem]{Proposition}
\newtheorem{theo}[lem]{Theorem}
\newtheorem*{ack}{Acknowledgments}
\title[ Ising model on random regular graphs]
       {\bf Critical behavior of the annealed  Ising model on random regular graphs}
\author{Van Hao Can}
\address{Institute of Mathematics, Vietnam Academy of Science and Technology, 18 Hoang Quoc Viet, 10307 Ha Noi, Viet Nam}
\email{cvhao89@gmail.com}
 \keywords{Ising model; Random graphs; Critical behavior; Annealed measure.
} 
\subjclass[2010]{05C80; 60F5; 82B20}
\begin{document}
\maketitle
\begin{abstract}
In \cite{GGHPb}, the authors have defined an annealed Ising model on random graphs and proved limit theorems for the magnetization of this model on some  random graphs including random 2-regular graphs. Then in \cite{C}, we generalized their results to the class of all random regular graphs. In this paper, we study the critical behavior of this model. In particular, we determine the critical exponents and prove a non standard limit theorem stating that the magnetization scaled by $n^{3/4}$ converges to a specific random variable, with $n$ the number of vertices of random regular graphs. 
\end{abstract}

\section{Introduction}
 Ising model is one of the most well-known model in the field of statistical physics that exhibits  phase transitions. This model has  been investigated fruitfully for integer  lattices, see e.g.\ \cite{G}. Recently, Ising model has been studied in random graphs as a model of the cooperative interaction of spins in random networks, see for instance \cite{AB,DM,DGH,DGM,MS}. As for other   models in random environments, probabilists study this model in   both  quenched setting and  annealed setting. In the quenched one, the Ising model is defined accordingly to typical samples of graphs. On the other hand, in the annealed one, the Ising model is defined by taking information of all realizations of graphs. In contrast of the well-development of studies on quenched setting (see e.g. \cite{DM, DGH, GGHPa, MS}),  there are few contributions in the annealed one. In  two recent papers \cite{GGHPb, DGGHP}, the authors   defined an annealed Ising model as follows.

 \vspace{0.2 cm}
Let $G_n=(V_n,E_n)$ be a random multigraph (i.e. a random graph possibly having self-loops and multiple edges between vertices) with the set of vertices $V_n= \{v_1, \ldots, v_n\}$ and the set of edges $E_n$. A spin $\sigma_i$  is assigned to each vertex $v_i$. Then for any configuration $\sigma \in \Omega_n:= \{+1,-1\}^n$, the Halmintonian is given by 
\[H(\sigma)= -\beta  \sum_{ i \leq j} k_{i,j} \sigma_i \sigma_j - B \sum_{i=1}^n \sigma_i,\]
where $k_{i,j}$ is the number of edges between $v_i$ and $v_j$, where $\beta \geq 0$ is the inverse temperature and  $B \in \R$ is the uniform external magnetic field.  

Then the configuration probability is given by  the {\it annealed measure}: for all $\sigma \in \Omega_n$,
\[\mu_n(\sigma)= \frac{\en(\exp (-H(\sigma))}{\E(Z_n(\beta,B))},\] 
 where $\E$ denotes the expectation with respect to the random graph, and $Z_n(\beta,B)$ is the partition function:
\[Z_n(\beta,B)= \sum\limits_{\sigma \in \Omega_n} \exp( -H(\sigma)).\]
 
\vspace{0.2 cm}
 In \cite{GGHPb}, Giardin\`a,  Giberti,  van der  Hofstad and  Prioriello  study  this annealed  Ising model  on the rank-one inhomogeneous random graph, the random regular graph with degree $2$  and the configuration model with degrees $1$ and $2$.  After determining limits of thermodynamic quantities and the critical inverse temperature, they prove  laws of large numbers  and central limit theorems  for the magnetization. Continuing this work, the authors of \cite{GGHPb} and Dommers investigate  the critical behaviors of the  Ising model on  inhomogeneous random graphs in  \cite{DGGHP}. 
 
\vspace{0.2 cm}
  In \cite{C}, we generalize the result in \cite{GGHPb} for all  random regular graphs, and  show that the thermodynamic limits in quenched and annealed models are actually the same. In this paper, we are going to study  critical behaviors of the annealed model.  More precisely, we aim to determine critical exponents of thermodynamics limits and prove a non-classical scaling limit theorem for the magnetization.
  
  \vspace{0.2 cm}
   Before stating our main results,  we first give some  definitions  following \cite{GGHPb, C} of   the thermodynamic quantities in finite volume.
\begin{itemize}
\item[(i)] The annealed  pressure is given by
\[\psi_n(\beta,B)=\frac{1}{n} \log \E(Z_n(\beta,B)).\]
\item[(ii)] The annealed magnetization is given by
\[M_n(\beta,B)= \frac{\partial}{\partial B} \psi_n(\beta,B).\]
An interpretation of the magnetization is 
\[M_n(\beta,B)=\E_{\mu_n} \left( \frac{S_n}{n}\right),\]
with $S_n$ the total spin, i.e. $S_n = \sigma_1 + \ldots+\sigma_n$. 
\item[(iii)] The annealed susceptibility is given by
\[\chi_n(\beta,B)= \frac{\partial}{\partial B} M_n(\beta,B) =\frac{\partial^2}{\partial B^2} \psi_n(\beta,B).\]
We also have
\[\chi_n(\beta,B)=\textrm{Var}_{\mu_n} \left( \frac{S_n}{\sqrt{n}}\right).\]
\item[(iv)] The annealed specific heat is given by 
\[\kC_n(\beta,B)= \frac{\partial^2}{\partial \beta^2} \psi_n(\beta,B).\]
\end{itemize}   
When the sequence $(M_n(\beta,B))_n$ converges to a limit, say $\kM(\beta,B)$, we define the spontaneous magnetization as $\kM(\beta,0^+)= \lim \limits_{B \searrow 0} \kM(\beta,B)$. Then the critical inverse temperature is defined as 
\[\beta_c = \inf \{\beta>0: \kM(\beta,0^+)>0 \}.\] 
 The uniqueness region of the existence of the limit magnetization is defined as
\[\kU= \{(\beta,B): \beta \geq 0, B \neq 0 \textrm{ or } 0<\beta < \beta_c, B=0\}.\]
 In \cite{C}, we have proved the existence of the limit of thermodynamic quantities.
  \begin{theo} \label{ttd} \cite[Theorem 1.1 ]{C}. Let us consider the Ising model on the random $d$-regular graph with $d\geq 2$. Then  the following assertions hold.
\begin{itemize}
\item[(i)] For all $\beta \geq 0$ and $B \in \R$, the annealed pressure converges 
\begin{eqnarray*}
 \lim_{n\rightarrow \infty} \psi_n(\beta,B) &=&\psi(\beta,B)   = \frac{\beta d}{2}  -B+ \max_{0 \leq t \leq 1} \left[ H_{\beta}(t)    + 2Bt  \right], 
\end{eqnarray*}
where 
\[H_{\beta}(t)=(t-1) \log(1-t)- t \log t + dF_{\beta}(t),\]
with
\begin{equation*} \label{fut}
F_{\beta}(t)= \int \limits_0^{t\wedge(1-t)} \log f_{\beta}(s)ds,
\end{equation*}
and $t\wedge(1-t) = \min \{t, 1-t\}$,
\begin{eqnarray*}
f_{\beta}(s) = \frac{e^{-2\beta}(1-2s)+ \sqrt{1+(e^{-4\beta}-1)(1-2s)^2}}{2(1-s)}.
\end{eqnarray*}
\item[(ii)] For all $(\beta, B) \in \kU$, the magnetization  converges 
\begin{equation*}
\lim_{n\rightarrow \infty}M_n(\beta,B) =\kM(\beta,B)= \frac{\partial}{\partial B} \psi(\beta,B). 
\end{equation*} 
Moreover, the critical inverse temperature is 
\begin{displaymath}
\beta_c= \emph{\atanh}(1/(d-1))=\left \{ \begin{array}{ll}
\frac{1}{2} \log \left( \frac{d}{d-2} \right) & \textrm{ if } \quad  d \geq 3  \\
\infty  & \textrm{ if } \quad  d= 2.  
\end{array} \right.
\end{displaymath}
\item[(iii)] For all $(\beta,B) \in \kU$,  the annealed susceptibility converges 
\[ \lim_{n\rightarrow \infty}\chi_n(\beta,B) = \chi(\beta,B)= \frac{\partial^2}{\partial B^2} \psi(\beta,B) .\]
\end{itemize}   
\end{theo}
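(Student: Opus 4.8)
The plan is to compute $\E(Z_n)$ exactly and then extract its exponential growth rate by Laplace's method. Realizing the random $d$-regular graph as the configuration model (a uniform pairing of the $dn$ half-edges), I first observe that for a configuration $\sigma$ with $k$ plus-spins one has $\sum_i\sigma_i=2k-n$ and $\sum_{i\le j}k_{i,j}\sigma_i\sigma_j=\tfrac{dn}{2}-2m$, where $m$ is the number of \emph{bichromatic} edges, i.e.\ edges joining a plus-half-edge to a minus-half-edge (self-loops and monochromatic multi-edges never contribute). Grouping configurations by $k$ gives
\[
\E(Z_n)=e^{\beta dn/2}\sum_{k=0}^n\binom{n}{k}e^{B(2k-n)}\,\E\!\left(e^{-2\beta m_k}\right),
\]
so that $\psi_n(\beta,B)=\tfrac{\beta d}{2}-B+\tfrac1n\log\sum_k\binom nk e^{2Bk}\E(e^{-2\beta m_k})$. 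Since the sum has only $n+1$ terms, its rate equals the maximum of the rates of the summands; with $t=k/n$ and $\tfrac1n\log\binom{n}{tn}\to-(1-t)\log(1-t)-t\log t$, part (i) follows once I show $\tfrac1n\log\E(e^{-2\beta m_{tn}})\to dF_\beta(t)$, for then Laplace's principle yields $\psi(\beta,B)=\tfrac{\beta d}{2}-B+\max_{0\le t\le1}[H_\beta(t)+2Bt]$.

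The heart of the matter is this last limit. Setting $x=e^{-2\beta}$ and $a=dtn$, $b=d(1-t)n$, the quantity $\E(e^{-2\beta m_{tn}})=N(x)/N(1)$ is a ratio of matching generating polynomials, where $N(x)=\sum_j\binom aj\binom bj j!\,(a-j-1)!!\,(b-j-1)!!\,x^j$ counts pairings of $a$ red and $b$ blue half-edges with weight $x$ per bichromatic edge, and $N(1)=(dn-1)!!$. Applying Stirling's formula to each factor and optimizing over the number $j$ of bichromatic edges produces the saddle equation $j^2=e^{-4\beta}(a-j)(b-j)$; writing $j=dn\,\eta$ this is a quadratic in $\eta$ with an explicit positive root. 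Rather than evaluate the optimum directly, I would differentiate the rate $\phi(t):=\lim\tfrac1n\log\E(e^{-2\beta m_{tn}})$ in $t$: the envelope theorem removes the dependence through the maximizer $j^*$ and leaves $\phi'(t)=\tfrac d2\log\frac{b-j^*}{a-j^*}-d\log\frac{1-t}{t}$. A short computation using the saddle equation identifies this with $d\log f_\beta(t)$ for $t\le\tfrac12$; the range $t\ge\tfrac12$ follows from the symmetry $\phi(t)=\phi(1-t)$, and the two branches match at $t=\tfrac12$ because $f_\beta(\tfrac12)=1$. Since $\phi(0)=0$, integrating gives $\phi(t)=d\int_0^{t\wedge(1-t)}\log f_\beta(s)\,ds=dF_\beta(t)$, which is exactly the claimed rate and explains the cut-off $t\wedge(1-t)$.

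For (ii) and (iii) I would exploit that $B\mapsto\psi_n(\beta,B)$ is convex, being a normalized log-partition function. Pointwise convergence of convex functions forces convergence of derivatives at every point where the limit is differentiable, so on $\kU$, where $\psi(\beta,\cdot)$ is differentiable, $M_n\to\kM=\partial_B\psi$, and the envelope theorem gives $\kM(\beta,B)=2t^*(\beta,B)-1$ with $t^*$ the maximizer. Since $H_\beta$ is symmetric about $\tfrac12$, $t=\tfrac12$ is a stationary point at $B=0$, and the spontaneous magnetization is positive exactly when it ceases to be the global maximizer. A second-order test is decisive: using $F_\beta'(\tfrac12)=0$ and $F_\beta''(\tfrac12^-)=f_\beta'(\tfrac12)=2(1-e^{-2\beta})$ one finds $H_\beta''(\tfrac12)=-4+2d(1-e^{-2\beta})$, which is negative iff $e^{-2\beta}>(d-2)/d$, i.e.\ iff $\beta<\tfrac12\log\frac{d}{d-2}=\atanh(1/(d-1))$; this pins down $\beta_c$. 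For the susceptibility, differentiating the stationarity relation $H_\beta'(t^*)+2B=0$ yields $\chi=\partial_B\kM=-4/H_\beta''(t^*)$, and $\chi_n\to\chi$ follows once the convergence of $M_n$ is upgraded to a neighborhood.

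The main obstacle is the rigorous asymptotics of $N(x)$: one must control the parity constraints on $a-j$ and $b-j$, verify that the sum over $j$ concentrates at the saddle uniformly for $t$ in compact subsets, and handle both the boundary regimes $t\to0,1$ and the non-analytic point $t=\tfrac12$, where the cut-off $t\wedge(1-t)$ creates a kink in $F_\beta$. The second genuinely harder point is part (iii): plain convexity delivers only first derivatives, so to obtain $\chi_n\to\chi$ I would need either locally uniform convergence of $M_n(\beta,\cdot)$ with equicontinuity of $\chi_n$, or a direct second-order Laplace expansion of the sum over $k$ about $t^*$, tracking the Gaussian fluctuation constant that becomes $-4/H_\beta''(t^*)$.
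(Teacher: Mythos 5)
You should note at the outset that this paper does not itself prove Theorem \ref{ttd}: it is quoted from \cite{C}, and the present paper only recalls the ingredients it reuses, namely the disagreeing-edge representation \eqref{mus}--\eqref{ezn} and the error control of Lemma \ref{lgF}. So the comparison is with the strategy of \cite{C} as summarized here. Your skeleton for part (i) coincides with that strategy: both exploit the identity $\sum_{i\le j}k_{i,j}\sigma_i\sigma_j=\tfrac{dn}{2}-2m$ (with $m$ the number of disagreeing edges), use exchangeability to reduce $\E(Z_n)$ to the one-dimensional sum \eqref{ezn} over magnetization sectors, and finish with Laplace's method. Where you genuinely differ is the core limit $\tfrac1n\log g_\beta(dtn,dn)\to dF_\beta(t)$: you expand $g_\beta$ as an exact matching polynomial $N(x)/N(1)$ and extract the rate by Stirling plus a saddle-point/envelope argument, whereas \cite{C} derives recursive formulas for the counts $X(k,m)$ and obtains the much finer additive control of Lemma \ref{lgF} (error $C(j-i)/n$ rather than $o(n)$), which the present paper needs again for Theorem \ref{ltc}. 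Your computation is in fact correct: the saddle equation $j^2=e^{-4\beta}(a-j)(b-j)$ together with the envelope derivative $\phi'(t)=\tfrac d2\log\tfrac{b-j^*}{a-j^*}+d\log\tfrac{t}{1-t}$ does reduce algebraically to $d\log f_\beta(t)$, and the identity $f_\beta(t)f_\beta(1-t)=1$ makes the two branches of the cut-off $t\wedge(1-t)$ consistent, so integrating from $\phi(0)=0$ gives $dF_\beta$. Modulo the parity and uniformity issues you flag, this is a sound and more transparent alternative for (i); what it does not buy is the Lipschitz-level error estimate.

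Two genuine gaps remain. First, in (ii), your identification of $\beta_c$ rests solely on the sign of $H_\beta''(\tfrac12)=-4+2d(1-e^{-2\beta})$. A second-derivative test at $t=\tfrac12$ is local: it shows that $\tfrac12$ ceases to be a local maximum exactly at $\atanh(1/(d-1))$, which only yields $\beta_c\le\atanh(1/(d-1))$. For $\beta$ below that threshold you must still rule out a first-order transition, i.e.\ a global maximizer of $H_\beta$ away from $\tfrac12$ while $\tfrac12$ is still locally stable; this needs the global concavity of $H_\beta$ (the analogue, for $\beta<\beta_c$, of inequality \eqref{ibc}), and for $\beta>\beta_c$ one also needs uniqueness of the nontrivial critical point $t_+$ so that $t_*(\beta,B)\to t_+$ as $B\searrow0$ -- these are precisely Claims 2a--2b of \cite{C} recorded in Lemma \ref{lhbt}. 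The same global structure is what makes $\psi(\beta,\cdot)$ differentiable at $B=0$ for $\beta<\beta_c$, so the convexity (Griffiths/Ellis \cite{E}) argument for $M_n$ also depends on it. Second, part (iii) is not actually proved: as you yourself concede, convexity delivers only first derivatives, and you leave the required second-order input (either a uniform second-order Laplace expansion around $t_*$, which is what \cite{C} and \cite{GGHPb} in effect carry out and which also produces the formula \eqref{cbb} $\chi=-4/\partial_{tt}H_\beta(t_*)$, or equicontinuity of $\chi_n$) as an unexecuted option. As it stands, the proposal establishes (i), establishes (ii) only up to the missing lower bound on $\beta_c$, and gives the limiting formula but not the convergence in (iii).
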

The convergence of annealed pressure has been first proved by Dembo, Montanari, Sly and Sun in \cite{DMSS}. By showing the replica symmetry of the partition function, the authors prove that annealed and quenched pressures converge to a common limit, which has been established in \cite{DM}.
Our proof of the convergence of annealed pressure in \cite{C} is based on  the direct relation between the Hamiltonian and the number of disagreeing edeges (i.e. edges with different spins) in  random regular graphs. To characterize the law of the disagreeing edges, we combine the echangeability of the model and many combinatorial computations.   The convergences of magnetization and susceptibility follow from the one of pressure and standard arguments  introduced in \cite{E,GGHPb}.
 
Unfortunately, we are not able to show the convergence of  specific heat, though it is very natural to expect  that $\kC_n(\beta,B)$ tends to the second derivative of $\psi(\beta,B)$ w.r.t $\beta$. Hence,  we  study an ''artificial'' specific heat limit defined as 
\[\kC(\beta,B)=\frac{\partial^2}{\partial \beta^2} \psi(\beta,B).\] 

Following \cite{DGGHP}, we give a definition of critical exponents of  thermodynamic limits. 

\vspace{0.2 cm}
\noindent {\bf Definition}. The annealed critical exponents $\boldsymbol{ \beta, \delta, \gamma, \gamma', \alpha, \alpha'}$ are defined by:
\begin{align*}
 \kM(\beta,0^+) &\asymp (\beta -\beta_c)^{\boldsymbol{\beta}}& \quad & \textrm{for }   \beta \searrow \beta_c,  \\
 \kM(\beta_c,B) &\asymp  B^{\boldsymbol{1/\delta}}& \quad & \textrm{for } B \searrow 0,  \\
  \chi(\beta, 0^+) &\asymp  (\beta_c-\beta)^{-\boldsymbol{\gamma}}& \quad & \textrm{for }  \beta \nearrow \beta_c, \\
  \chi(\beta, 0^+) &\asymp  (\beta-\beta_c)^{-\boldsymbol{\gamma}'}& \quad & \textrm{for }  \beta \searrow \beta_c\\
   \kC(\beta, 0^+) &\asymp  (\beta_c-\beta)^{-\boldsymbol{\alpha}}& \quad & \textrm{for }  \beta \nearrow \beta_c, \\
  \kC(\beta, 0^+) &\asymp  (\beta-\beta_c)^{-\boldsymbol{\alpha}'}& \quad & \textrm{for }  \beta \searrow \beta_c,
\end{align*}
where we write $f(x) \asymp g(x)$ if the ratio $f(x)/g(x)$ is bounded from $0$ and infinity for the specified limit.

\subsection{Main results} Our first result aims at determining  the critical exponents defined above. 
\begin{theo} \label{cexp}(Annealed critical exponents). 
Let us consider the annealed Ising model on random $d$-regular graph with $d\geq 3$. Then the critical exponents satisfy  
\begin{eqnarray*}
\boldsymbol{\beta} &= &\tfrac{1}{2}  \\
\boldsymbol{\delta} &=&3\\
\boldsymbol{\gamma}=\boldsymbol{\gamma '}&=&1 \\
\boldsymbol{\alpha}=\boldsymbol{\alpha'}&=&0. 
\end{eqnarray*}
\end{theo}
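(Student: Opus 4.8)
The plan is to reduce all four exponents to a local Taylor analysis of the variational function $H_\beta$ near $t=\tfrac12$. By the envelope theorem applied to the formula in Theorem~\ref{ttd}, the magnetization is $\kM(\beta,B)=2t^\ast-1$, where $t^\ast=t^\ast(\beta,B)$ is the maximizer of $H_\beta(t)+2Bt$, characterized by the stationarity condition $H_\beta'(t^\ast)=-2B$. Writing $u=t-\tfrac12$, so that $\kM=2u^\ast$, every exponent is controlled by the expansion of $H_\beta$ at $u=0$. Differentiating $\kM=2t^\ast-1$ together with the stationarity condition yields the clean identity $\chi(\beta,B)=-4/H_\beta''(t^\ast)$, which I will use for $\boldsymbol{\gamma},\boldsymbol{\gamma}'$.

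The crucial structural input is a hidden symmetry. Differentiating gives $H_\beta'(t)=\log\frac{1-t}{t}+d\log f_\beta(t)$ for $t\le\tfrac12$, and a direct computation shows $f_\beta(s)f_\beta(1-s)=1$: with $w=1-2s$ and $a=e^{-2\beta}$, the cross term cancels because the coefficient inside the square root is $a^2-1$, so $(aw+\sqrt{1+(a^2-1)w^2})(-aw+\sqrt{1+(a^2-1)w^2})=1-w^2=4s(1-s)$. Hence $\log f_\beta$ is odd about $s=\tfrac12$, and since the entropy part $(t-1)\log(1-t)-t\log t$ is even about $t=\tfrac12$, the whole function $H_\beta(\tfrac12+u)$ is even in $u$. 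Consequently $H_\beta'''(\tfrac12)=0$, and the odd function $H_\beta'(\tfrac12+u)$ expands as $H_\beta'(\tfrac12+u)=a_1u+a_3u^3+O(u^5)$ with $a_1=H_\beta''(\tfrac12)$ and $a_3=H_\beta^{(4)}(\tfrac12)/6$. Computing $a_1=-4+2d(1-e^{-2\beta})$ recovers $\beta_c=\tfrac12\log\frac{d}{d-2}$ as its unique zero and gives $a_1\sim 4(d-2)(\beta-\beta_c)$ near $\beta_c$, pinning down the sign change that drives the transition.

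The main labor, and the step I expect to be the real obstacle, is computing $a_3$ and simplifying it at $\beta_c$. Expanding $\log f_\beta(\tfrac12+u)$ to third order and combining with the entropy contribution yields $a_3=\tfrac13\bigl[\,4d(1-a)^2(2+a)-16\,\bigr]$ with $a=e^{-2\beta}$; substituting $a=(d-2)/d$ collapses the expression to $a_3(\beta_c)=-\tfrac{16(d-1)(d-2)}{3d^2}<0$ for $d\ge 3$. This negativity is precisely what makes $t=\tfrac12$ a genuine quartic maximum at criticality and keeps $-a_1/a_3>0$ just above $\beta_c$; carrying out the cubic bookkeeping and trusting the cancellations is the delicate part.

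With $a_1,a_3$ in hand the exponents drop out. At $\beta=\beta_c$ the stationarity condition reads $a_3u^3\asymp-2B$, so $\kM=2u\asymp B^{1/3}$ and $\boldsymbol{\delta}=3$. For $B=0^+$ and $\beta>\beta_c$ the nonzero maximizer solves $a_1+a_3u^2=0$, giving $u\asymp\sqrt{-a_1/a_3}\asymp(\beta-\beta_c)^{1/2}$ and hence $\boldsymbol{\beta}=\tfrac12$. For the susceptibility at $B=0^+$ I use $\chi=-4/H_\beta''(t^\ast)$: when $\beta<\beta_c$, $t^\ast=\tfrac12$ gives $\chi=-4/a_1\asymp(\beta_c-\beta)^{-1}$; when $\beta>\beta_c$, inserting $u^2=-a_1/a_3$ gives $H_\beta''(t^\ast)=a_1+3a_3u^2=-2a_1$, hence $\chi=2/a_1\asymp(\beta-\beta_c)^{-1}$, so $\boldsymbol{\gamma}=\boldsymbol{\gamma}'=1$. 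Finally, at $B=0$ the pressure is $\tfrac{\beta d}{2}+H_\beta(\tfrac12)$ for $\beta\le\beta_c$, which is smooth in $\beta$, while for $\beta>\beta_c$ an extra term $-a_1^2/(4a_3)\sim K(\beta-\beta_c)^2$ appears; its second $\beta$-derivative is a finite nonzero constant, so $\kC(\beta,0^+)$ merely jumps across $\beta_c$ rather than diverging, and stays bounded away from $0$ and $\infty$ on both sides, yielding $\boldsymbol{\alpha}=\boldsymbol{\alpha}'=0$.
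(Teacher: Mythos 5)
Your proposal gets all four exponents and follows the same overall skeleton as the paper: every exponent is read off a Taylor analysis of $H_\beta$ at $t=\tfrac12$, combined with $\kM(\beta,B)=2t_*-1$ and the identity $\chi=-4/\partial_{tt}H_\beta(t_*)$ (the paper's \eqref{cbb}); the localization facts you use tacitly (uniqueness of the stationary point and $t_*\to\tfrac12$, $t_+\to\tfrac12$ at criticality) are the paper's Lemma \ref{lhbt}. Within that skeleton, two of your moves are genuinely different and in fact cleaner. First, the identity $f_\beta(s)f_\beta(1-s)=1$ (your cancellation is correct) makes $H_\beta(\tfrac12+u)$ even in $u$, so $H_\beta'''(\tfrac12)=0$ and the odd expansion $H_\beta'(\tfrac12+u)=a_1u+a_3u^3+\kO(u^5)$ come for free; the paper instead verifies $C^4$-regularity of $F$ across $\tfrac12$ by matching one-sided limits and computes the derivatives by brute force (Lemma \ref{lbbc}). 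Your $a_1,a_3$ agree exactly with the paper's expansions \eqref{ltm}, \eqref{lthh} and with \eqref{hpb}, so your $\boldsymbol{\delta}$ and $\boldsymbol{\beta}$ arguments coincide with the paper's. Second, for $\boldsymbol{\gamma}'$ your Landau identity $\partial_{tt}H_\beta(t_+)=a_1+3a_3u_+^2+\kO(u_+^4)=-2a_1+o(\beta-\beta_c)=-8(d-2)(\beta-\beta_c)+o(\beta-\beta_c)$ replaces the paper's $P/Q$ computation, and it \emph{contradicts} the paper's amplitude $-\tfrac{4(d-2)(2d+1)}{d-1}(\beta-\beta_c)$ in \eqref{htpb}. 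Your value is the correct one: in the paper's expansion $\theta_2(t_+)=\big(\tfrac{1-2s_+}{1+2s_+}\big)^{1/d}$ the second-order coefficient should be $\tfrac{8s_+^2}{d^2}$, not $\tfrac{16s_+^2}{d^2}$, and redoing \eqref{eop} with this correction gives $P(t_+)=2(d-2)(\beta-\beta_c)+o(\beta-\beta_c)$, hence $\partial_{tt}H_\beta(t_+)=-8(d-2)(\beta-\beta_c)+o(\beta-\beta_c)$, in agreement with your $-2a_1$ (and with the mean-field amplitude ratio $2$). Since only the order in $(\beta-\beta_c)$ matters, the conclusion $\boldsymbol{\gamma}'=1$ is the same either way.

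The one genuine gap is in your treatment of $\boldsymbol{\alpha}=\boldsymbol{\alpha}'=0$. You write $\psi(\beta,0^+)=\tfrac{\beta d}{2}+H_\beta(\tfrac12)-\tfrac{a_1^2}{4a_3}+\kO((\beta-\beta_c)^3)$ for $\beta>\beta_c$ and then differentiate this twice in $\beta$; but a pointwise $\kO((\beta-\beta_c)^3)$ bound on the remainder gives no control on the remainder's \emph{second derivative}, so "its second $\beta$-derivative is a finite nonzero constant" does not follow as written. To repair it, either (i) show that $u_+^2$ extends to a $C^2$ function of $\beta$ through $\beta_c$ (implicit function theorem applied to $v\mapsto a_1(\beta)+a_3(\beta)v+\dots$, using $a_3(\beta_c)\neq 0$) and that $H_\beta(\tfrac12+u)$ is a smooth function of $(u^2,\beta)$, or (ii) do what the paper does: compute $\kC$ from $\partial_{\beta\beta}\psi=\partial_{\beta\beta}L-(\partial_{t\beta}L)^2/\partial_{tt}L$ (equation \eqref{pbbp}) --- the very same implicit differentiation that gave you $\chi=-4/H_\beta''(t_*)$ --- and take limits of the three terms separately; both numerator and denominator of the correction term vanish linearly, so the ratio converges. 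In addition, since $\boldsymbol{\alpha}$ is defined through $\asymp$, you must also check that $\kC(\beta,0^+)$ stays bounded away from $0$, i.e.\ that the smooth part $\partial_{\beta\beta}H_\beta(\tfrac12)$ is strictly positive near $\beta_c$; you assert this but do not verify it, whereas the paper exhibits it as the explicit positive integral $2d\int_0^1 e^{-2\beta}u(1-u^2)\big(1+(e^{-4\beta}-1)u^2\big)^{-3/2}du$. With these two points supplied your argument closes, and the jump it predicts, $2K=\tfrac{3d^2(d-2)}{2(d-1)}$, is exactly what the corrected form of the paper's computation yields (the paper's stated jump $\tfrac{3d^2(d-2)}{2d+1}$ inherits the arithmetic slip noted above).
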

In \cite{DGH}, the authors settle the quenched critical exponents for a large class of random graphs, so-called locally-tree like graphs. In particular, for the random regular graphs, the quenched critical exponents satisfy $\boldsymbol{ \beta}=\tfrac{1}{2}$, $\boldsymbol{ \delta}=3$, $\boldsymbol{ \gamma}=1$.  Additionally,  we have proved in \cite{C} that for the case of random regular graphs, the annealed and quenched thermodynamic quantities are equal.  Therefore, the values of $\boldsymbol{ \beta}, \boldsymbol{ \delta},\boldsymbol{ \gamma}$ can be directly deduced from  the result in \cite{DGH}. On the other hand, the values of other critical exponents $\boldsymbol{ \gamma'}, \boldsymbol{ \alpha}, \boldsymbol{ \alpha'}$ are new and are the main contribution of Theorem \ref{cexp}.   

\vspace{0.2 cm}
Our second result is on the asymptotic behavior of the total spin $S_n$ as $n$ tends to infinity. In \cite[Theorem 1.3 and Proposition 1.4]{C}, we have proved that if $(\beta > 0, B \neq 0)$ or $(0<\beta < \beta_c, B=0)$ then   $ S_n$ satisfies a central limit theorem, and if $(\beta > \beta_c, B= 0)$ then $S_n/n$ is concentrated at two opposite values.   In the  following result, we study  the scaling limit  of $S_n$ for the remained case when $\beta = \beta_c$ and $B=0$.  
\begin{theo}  \label{ltc} (Scaling limit theorem at criticality). Consider the annealed Ising model on random $d$-regular graphs with $d\geq 3$. Suppose that $\beta=\beta_c$ and $B=0$. Then 
\[\frac{\sigma_1 + \ldots + \sigma_n }{n^{3/4}} \quad \mathop{\longrightarrow}^{(\kD)} \quad X  \qquad \textrm{ w.r.t. } \mu_n,\]
where $X$ is a random variable with density proportional to 
\[\exp\left(\frac{-(d-1)(d-2)x^4}{12d^2}\right).\] 
%\frac{|H^{(4)}(1/2)|}{384} =
\end{theo}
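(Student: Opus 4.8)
The plan is to reduce the statement to a non-standard local limit analysis of the number of positive spins and then to a saddle-point computation at the degenerate maximizer $t=\tfrac12$ of $H_{\beta_c}$. First I would record that, by the exchangeability of the annealed model, the $\mu_n$-weight of all configurations with a prescribed number $k$ of positive spins depends only on $k$. Writing the Hamiltonian through the number $D$ of disagreeing edges — for a $d$-regular graph $\sum_{i\le j}k_{i,j}\sigma_i\sigma_j=\tfrac{dn}{2}-2D$ — one gets, at $B=0$,
\[ \mu_n(S_n=2k-n)=\frac{w_n(k)}{\sum_{j}w_n(j)},\qquad w_n(k)=\binom{n}{k}\,\E\!\left[e^{-2\beta D}\mid \#\{+\}=k\right]. \]
This is exactly the quantity analysed in \cite{C}: Theorem~\ref{ttd} and its proof give $\tfrac1n\log w_n(k)\to H_\beta(k/n)$, the entropy part coming from $\binom{n}{k}$ and the term $dF_\beta$ from the expectation over the graph. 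The point here is to upgrade this large-deviation statement to a local one in the critical window.

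Next I would carry out the degenerate expansion at $t=\tfrac12$. Since $F_\beta(t)=F_\beta(1-t)$, the function $H_\beta$ is symmetric about $\tfrac12$, so all its odd derivatives vanish there. Using $F_\beta'(t)=\log f_\beta(t)$ for $t<\tfrac12$ together with $f_\beta(\tfrac12)=1$, a direct computation gives $H_\beta''(\tfrac12)=-4+2d\,(1-e^{-2\beta})$, which vanishes precisely at $\beta=\beta_c=\tfrac12\log\frac{d}{d-2}$, confirming the degeneracy at criticality. Carrying the expansion of $\log f_\beta$ about $\tfrac12$ to one further order yields $H_{\beta_c}^{(4)}(\tfrac12)=-32(d-1)(d-2)/d^2$. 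Hence, for $k=\tfrac n2+\tfrac12 x\,n^{3/4}$ (i.e.\ $S_n=x\,n^{3/4}$), one has $t-\tfrac12=\tfrac12 x\,n^{-1/4}$ and
\[ n H_{\beta_c}(k/n)-n H_{\beta_c}(\tfrac12)=\frac{H_{\beta_c}^{(4)}(\tfrac12)}{384}\,x^4+o(1)=-\frac{(d-1)(d-2)}{12 d^2}\,x^4+o(1), \]
uniformly on compact sets, which is exactly the exponent in the target density.

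For the passage to the limit, take a bounded continuous test function $g$ and write $\E_{\mu_n}[g(S_n/n^{3/4})]$ as the ratio of $\sum_k g((2k-n)/n^{3/4})\,w_n(k)$ to $\sum_k w_n(k)$. The core estimate I would establish is that, uniformly over the window $|2k-n|\le n^{3/4+\varepsilon}$, $w_n(k)=w_n(\lfloor n/2\rfloor)\exp\!\big(nH_{\beta_c}(k/n)-nH_{\beta_c}(\tfrac12)+o(1)\big)$, i.e.\ that the sub-exponential prefactor of $w_n$ is asymptotically constant across the critical window. Combined with the previous step, and with the tail bound that the contribution of $|2k-n|>n^{3/4+\varepsilon}$ is negligible — because $\tfrac12$ is the unique global maximizer of $H_{\beta_c}$ and the quartic lower bound controls the moderate range — the two sums become Riemann sums of mesh $2n^{-3/4}$ for $\int g(x)\,e^{-(d-1)(d-2)x^4/(12d^2)}\,dx$ and $\int e^{-(d-1)(d-2)x^4/(12d^2)}\,dx$. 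The mesh factor and $w_n(\lfloor n/2\rfloor)$ cancel in the ratio, giving the claimed convergence to $X$.

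The genuinely non-routine step is this local limit theorem: at criticality the quadratic coefficient $H_{\beta_c}''(\tfrac12)$ vanishes, so the quadratic saddle-point analysis underlying the off-critical central limit theorem of \cite{C} does not apply, and one must control $\E[e^{-2\beta D}\mid k]$ to one higher order, with error $o(1)$ after exponentiation, uniformly for $k$ at distance $O(n^{3/4+\varepsilon})$ from $n/2$. This requires pushing the combinatorial estimates for the law of the number of disagreeing edges beyond what is needed for the Gaussian regime, while simultaneously handling the lattice constraint $S_n\equiv n \pmod 2$; I expect this uniform fourth-order local control to be the main obstacle.
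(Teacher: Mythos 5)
Your proposal follows essentially the same route as the paper: your weights $w_n(k)=\binom{n}{k}\,\E\!\left[e^{-2\beta D}\mid \#\{+\}=k\right]$ are exactly the paper's $x_j(n)=\binom{n}{j}g(dj,dn)$ from \eqref{xjgj}, built on the representation \eqref{mus} inherited from \cite{C}; your quartic expansion agrees with Lemma \ref{lbbc} (indeed $H^{(4)}(\tfrac12)/384=-(d-1)(d-2)/(12d^2)$); and your window-plus-tail Laplace argument mirrors Section 4 and the Appendix, the only cosmetic difference being that the paper proves convergence of the moment generating function $\E_{\mu_n}(e^{rS_n/n^{3/4}})$ rather than testing against bounded continuous functions. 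The one step you defer --- uniform control of the prefactor $\E[e^{-2\beta D}\mid k]$ across the critical window, which you call the main obstacle --- is precisely Lemma \ref{lgF}, quoted from \cite[Lemma 3.1]{C}, so no new combinatorial work is needed in this paper. Moreover, you overestimate what that step requires: one does not need to control the conditional expectation ``to one higher order'' than in the Gaussian regime. The same Lipschitz-in-$j$ bound $\big|[\log g(dj,dn)-ndF(j/n)]-[\log g(di,dn)-ndF(i/n)]\big|\leq C|j-i|/n$ already used for the off-critical CLT gives an error $\kO(n^{-1/6})=o(1)$ uniformly on the window $|j-j_*|\leq n^{5/6}$; all fourth-order information enters only through the deterministic function $H$, i.e.\ through calculus (Lemma \ref{lbbc}), not through sharper estimates on the law of the disagreeing edges. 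Also, the lattice constraint $S_n\equiv n\ (\mathrm{mod}\ 2)$ you worry about is invisible in this formulation, since one sums over the number $j$ of plus spins rather than over values of $S_n$. Granted the cited lemma, your outline is structurally complete and matches the paper's proof.
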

Different from classical central limit theorems, the  scaling limit theorem at criticality  has non-Gaussian limit distribution. This phenomena  has been observed for some spin models, such as for Curie-Weiss model, or Ising model on $\Z^2$ and inhomogeneous random graphs, see \cite{E,ENa,ENb, CNGa, CNGb, DGGHP}. In fact, some authors believe that the critical nature of the total spin has universal scaling limit, see for example \cite{E, DGGHP}. Indeed, they guess that when $\beta= \beta_c $ and $ B=0$,   $S_n$ scaled by $n^{\boldsymbol{\delta}/(\boldsymbol{\delta} +1)}$, with  $\boldsymbol{\delta}$ the exponent of magnetization, converges in law to a random variable whose the tail of the density behaves like  $\exp(-cx^{\boldsymbol{\delta} +1})$ for $x$ large enough. Our results confirm this belief  for the class of random regular graphs. 

\subsection{Discussion} We now make some further remarks on our results.

\vspace{0.2 cm}
 (i) Since $\beta_c$ is finite if and only  if  $d \geq 3$, in our results, we always assume that $d\geq 3$.

\vspace{0.2 cm}
(ii) A simple interpretation of the specific heat is as follows
\[\kC_n(\beta,B)=\textrm{Var}_{\hat{\mu}_n} \left( \frac{ \sum_{ i \leq j} k_{i,j} \sigma_i \sigma_j}{\sqrt{n}}\right),\]
where $\hat{\mu}_n$ is a probability measure on $\kG_n \times \Omega_n$, with $\kG_n$ the sample space of the random $d$-regular graph, given by
\[\int\limits_{\kG_n \times \Omega_n} f d \hat{\mu}_n = \frac{\sum \limits_{\sigma \in \Omega_n} \E \left(f(G,\sigma)e^{-H_n(\sigma)} \right)}{\sum \limits_{\sigma \in \Omega_n} \E \left(e^{-H_n(\sigma)} \right)}. \] 
We notice that $\mu_n$ is a marginal measure of $\hat{\mu}_n$,
$$\mu_n(\sigma)=\E(\hat{\mu}_n(G,\sigma)).$$
Studying the measure  $\hat{\mu}_n$ might give some ideas to derive the convergence of $(\kC_n(\beta,B))$.

\vspace{0.2 cm}
(iii) A natural and interesting question is to generalize our results for the configuration model random graphs with general degree distributions (see \cite{H} for a definition). Comparing with  the case of  random regular graphs, we have additionally a source of randomness coming from the sequence of degrees. This randomness makes the problem  much more difficult. In particular, we  have proved in \cite[Proposition 7.3]{C} that the annealed pressure converges to a limit given by 
\[\psi(\beta,B)= -B + \max_{0\leq t \leq 1} \left[(t-1) \log(1-t)- t \log t+2Bt+ G_{\beta}(t) \right],\]
where $G_{\beta}(t)$ is a Lipschitz function concerning with a large deviation result on the degree distribution of configuration model. Due to the complexity of  $G_{\beta}(t)$,  we are not able to  show the differentiability  of $\psi(\beta,B)$. Without the differentiability, we can not go further to  other  thermodynamic limits or critical exponents. We also remark that when  the degrees of vertices fluctuates,  the authors of \cite{GGHPa} conjecture  that annealed and quenched Ising models behaves differently. In particular, they guess that the  critical inverse  temperatures  are different.   It would be very interesting to know whether the annealed and quenched critical exponents are equal or not. Notice that in the case of inhomogeneous random graphs, though the annealed and quenched  models have different critical inverse temperatures, they have the same critical exponents, see \cite{DGGHP}.

\vspace{0.2 cm}
(iv) On the proof of Theorems \ref{cexp} and \ref{ltc}, we largely use techniques and results in \cite{C,DGGHP}. In particular, to achieve the critical exponents, we exploit the representation of the annealed pressure $\psi(\beta,B)$ in Theorem \ref{ttd} and use Taylor expansion to study the partial derivatives of $\psi$ when variables $\beta, B$ tend to critical values. On the other hand, to prove Theorem \ref{ltc}, we show the convergence of  the generating function of $S_n/n^{3/4}$ as $n$ tends to infinity, by using Laplace method as in \cite{C}. Previously, the same strategy of proof has been applied by the authors in \cite{DGGHP} to identify critical exponents and prove  scaling limit theorems for the case of  inhomogeneous random graphs.

 \vspace{0.2 cm}
 Finally,  the paper is organized as follows. In Section 2, we give a definition of random regular graphs and prove some useful preliminary results. Then,  we prove Theorems \ref{cexp} and \ref{ltc} in Sections 3 and 4 respectively. 

\section{Preliminaries}
\subsection{Random regular graphs} For each $n$, we start with a vertex set $V_n$ of cardinality  $n$ and construct the edge set as follows. For each vertex $v_i$, start with $d$ half-edges incident to $v_i$.  Then we denote by $\kH$ the set of all the half-edges. Select one of them $h_1$ arbitrarily and then choose a half-edge $h_2$ uniformly from $\kH \setminus \{h_1\}$, and match $h_1$ and $h_2$ to form an edge. Next, select arbitrarily another half-edge $h_3$ from $\kH \setminus \{h_1, h_2\}$  and match it to another $h_4$ uniformly chosen from  $\kH \setminus \{h_1, h_2, h_3\}$. Then continue this procedure until there are no more half-edges.  We finally get a multiple random graph that may have self-loops and multiple edges between vertices satisfying all vertices have degree $d$.  We denote the obtained graph by $G_{n,d}$ and call it  {\it random $d$-regular graph}.
\subsection{Preliminary results}
Following the notation in \cite{C}, we denote by $G_{m,1}$  the random 1-regular graph with the vertex set $\bar{V}_m= \{w_1, \ldots, w_m\}$.  For any $k\leq m$,    $X(k,m)$ is the number of edges between $\bar{U}_k= \{w_1, \ldots, w_k\}$ and $\bar{U}^c_k=\bar{V}_m \setminus \bar{U}_k$ in $G_{m,1}$.   Then for all $0 \leq k \leq m$, we  define
\[g_{\beta}( k, m) = \E \left( e^{-2 \beta X(k,m)} \right).\]
We have already proved in \cite[Section 2]{C} that
\begin{equation} \label{mus}
\mu_n(\sigma) = \frac{\E \left(e^{-H_n(\sigma)}\right)}{\E(Z_n(\beta,B))} =\frac{e^{\left(\frac{\beta d}{2} -B \right)n}g_{\beta}( d |\sigma_+|,dn) e^{2B|\sigma_+|}}{\E(Z_n(\beta,B)) },
\end{equation}
where 
\[\sigma_+ = \{v_j: \sigma_j =1\},\]
and 
\begin{equation} \label{ezn}
\E(Z_n(\beta,B))= e^{ \left(\frac{\beta d}{2} - B\right)n} \times \sum_{j=0}^n \binom{n}{j} e^{2Bj} g_{\beta}(dj,dn). 
\end{equation}
In \cite{C}, by deriving  recursive formulas for the number of disagreeing edges $(X(k,m))$, we obtain the following result on the asymptotic behavior of the sequence $(g_{\beta}(dj,dn))$.
\begin{lem} \cite[Lemma 3.1]{C} \label{lgF}
 Suppose that $\beta \geq 0$. Then  there exists a positive constant $C$, such that for all $0 \leq i \leq j \leq n$,
 \begin{equation} \label{mgjj}
  \Big | \left[ \log g_{\beta}(dj,dn)-ndF_{\beta} \left(\frac{j}{n}\right)\right]  - \left[ \log g_{\beta}(di,dn)-ndF_{\beta} \left(\frac{i}{n} \right)\right] \Big| \leq \frac{C(j-i) }{n},
  \end{equation}
  where $F_{\beta}(t)$ is defined in Theorem \ref{ttd}.
\end{lem}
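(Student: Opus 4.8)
The plan is to establish the stronger local statement that $e(k):=\log g_\beta(k,m)-mF_\beta(k/m)$ (with $m=dn$) is Lipschitz in $k$ with constant $O(1/m)$; since $ndF_\beta(j/n)=mF_\beta(k/m)$ for $k=dj$, summing the $d(j-i)$ unit‑step bounds over $di\le k<dj$ then gives \eqref{mgjj}. I would first halve the work with two exact symmetries: swapping the two sides of the partition leaves the crossing edges unchanged, so $X(k,m)\stackrel{d}{=}X(m-k,m)$ and $g_\beta(k,m)=g_\beta(m-k,m)$, while $F_\beta(t)=F_\beta(1-t)$; hence $e(k)=e(m-k)$ and it suffices to treat $0\le k\le m/2$, where $F_\beta'(t)=\log f_\beta(t)$. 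As the radicand $1+(e^{-4\beta}-1)(1-2t)^2$ of $f_\beta$ stays in $[e^{-4\beta},1]$, $F_\beta$ is smooth there and $m[F_\beta(\tfrac{k+1}{m})-F_\beta(\tfrac{k}{m})]=\log f_\beta(k/m)+O(1/m)$. Everything thus reduces to the single‑step ratio estimate
\[
\log\frac{g_\beta(k+1,m)}{g_\beta(k,m)}=\log f_\beta(k/m)+O(1/m)\qquad\text{uniformly in }0\le k<m/2 .
\]

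The engine is an exact one‑step identity from coupling. On a single matching, enlarging the plus‑set from $\{1,\dots,k\}$ to $\{1,\dots,k+1\}$ only changes the edge at the half‑edge $k+1$, so $X(k+1,m)=X(k,m)+\eta$ with $\eta=+1$ if the partner of $k+1$ lies in $\{k+2,\dots,m\}$ and $\eta=-1$ if it lies in $\{1,\dots,k\}$. Tilting by $e^{-2\beta X(k,m)}$ gives
\[
\frac{g_\beta(k+1,m)}{g_\beta(k,m)}=e^{2\beta}-2\sinh(2\beta)\,\widetilde p_k,\qquad \widetilde p_k=\frac{m-k-1}{m-1}\,\frac{g_\beta(k,m-2)}{g_\beta(k,m)},
\]
the formula for $\widetilde p_k=\widetilde{\pp}(\eta=1)$ coming from deleting the $B$–$B$ edge at $k+1$. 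Conditioning instead on the partner of a minus half‑edge yields the companion recursion $g_\beta(k,m)=\tfrac{m-k-1}{m-1}\,g_\beta(k,m-2)+\tfrac{k}{m-1}\,e^{-2\beta}g_\beta(k-1,m-2)$.

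Next I would identify the limit and check it is exactly $f_\beta$. With $s=k/m$, $w=e^{-2\beta}$, and assuming the ratios $\theta_m=g_\beta(k+1,m)/g_\beta(k,m)$ and $\rho_m=g_\beta(k,m-2)/g_\beta(k,m)$ converge, the companion recursion gives $1/\rho_\infty=(1-s)+sw/\theta_\infty$ (using $g_\beta(k-1,m-2)/g_\beta(k,m-2)\to1/\theta_\infty$), while the coupling identity gives $\theta_\infty=e^{2\beta}-2\sinh(2\beta)(1-s)\rho_\infty$. Eliminating $\rho_\infty$ and using $e^{2\beta}w=1$ collapses the pair to $(1-s)\theta_\infty^2-w(1-2s)\theta_\infty-s=0$, which is precisely the quadratic solved by $f_\beta(s)$; thus $\theta_\infty=f_\beta(s)$ (and indeed $\widetilde p_0=1$, $\theta_\infty=e^{-2\beta}=f_\beta(0)$). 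Quantitatively, frozen over one $m\to m-2$ move the coupled system is a contraction toward the root $f_\beta(s)$, so the only true error is the $O(1/m)$ drift of the target as $s$ moves by $O(1/m)$ per step, which is the asserted $O(1/m)$ correction to $\widetilde p_k$ and hence to $\log\theta_m$.

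The main obstacle is making this uniform in $k$ up to the boundary $s\to0$ (and, by symmetry, $s\to1$), with error genuinely $O(1/m)$ and not $O(1/k)$: a per‑step error of size $1/k$ would sum only to $O(\log m)$ and would break the Lipschitz bound. Near the boundary the tilted law of the tagged edge degenerates and the contraction rate deteriorates, so there I would not use the recursion but a direct cumulant expansion: $\log g_\beta(k,m)=-2\beta k+(e^{4\beta}-1)\tfrac{k^2}{2m}+\cdots$ reproduces the small‑$t$ Taylor expansion of $mF_\beta(k/m)$ term by term (the first match being $f_\beta'(0)/f_\beta(0)=e^{4\beta}-1$), which controls the single steps for $k\lesssim\sqrt m$; matching this to the bulk estimate on the overlap $\sqrt m\ll k\ll m$ closes the gap. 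Summing the single‑step bounds over $i\le l<j$ finally gives $|e(dj)-e(di)|\le C(j-i)/n$, i.e.\ \eqref{mgjj}.
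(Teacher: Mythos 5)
The paper itself contains no proof of this lemma: it is imported verbatim from \cite[Lemma 3.1]{C}, where (as the paper remarks) it is proved by deriving recursive formulas for the disagreement count $X(k,m)$ under the sequential pairing construction. Your proposal follows the same route in spirit, and its exact algebraic core is correct and nicely executed: the one-step coupling identity $g_\beta(k+1,m)/g_\beta(k,m)=e^{2\beta}-2\sinh(2\beta)\widetilde p_k$ with $\widetilde p_k=\tfrac{m-k-1}{m-1}\,g_\beta(k,m-2)/g_\beta(k,m)$, the companion recursion, and the elimination leading to $(1-s)\theta^2-e^{-2\beta}(1-2s)\theta-s=0$ are all exact, and $f_\beta(s)$ is indeed the positive root of that quadratic (because $1-(1-2s)^2=4s(1-s)$). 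In fact, writing $r_m(k)=g_\beta(k+1,m)/g_\beta(k,m)$, your two identities combine into an exact closed chain $r_m(k)=\Phi_{k,m}\bigl(r_{m-2}(k-1)\bigr)$ with $\Phi_{k,m}(x)=e^{2\beta}-2\sinh(2\beta)\frac{(m-k-1)x}{(m-k-1)x+k e^{-2\beta}}$, anchored after $k$ iterations at the exactly known value $r_{m-2k}(0)=e^{-2\beta}=f_\beta(0)$.

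The genuine gap is the quantitative step, which is the entire content of the lemma. You assert that ``frozen over one $m\to m-2$ move the coupled system is a contraction toward the root $f_\beta(s)$'' and conclude a per-step error $O(1/m)$, but you establish neither (i) a region on which $\Phi_{k,m}$ contracts uniformly, nor (ii) an a priori localization of the actual ratios $r_m(k)$ inside such a region. The only free a priori bound is $r_m(k)\in[e^{-2\beta},e^{2\beta}]$ (from $\widetilde p_k\in[0,1]$), and on that interval $\Phi_{k,m}$ is \emph{not} a contraction for large $\beta$: its derivative is largest at the left endpoint, where $|\Phi_{k,m}'(e^{-2\beta})|\approx 2\sinh(2\beta)\,s(1-s)\,e^{2\beta}$, which at $s=\tfrac12$ equals $(e^{4\beta}-1)/4>1$ once $e^{4\beta}>5$. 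One can check that \emph{at the fixed point} $|\Phi_{k,m}'|\le\tanh\beta+o(1)<1$ uniformly in $s\in[0,\tfrac12]$, so the scheme is salvageable, but a trapping/localization argument must come first, and the resulting error-propagation induction $\delta_{k,m}\le L\,\delta_{k-1,m-2}+C/m$ with uniform $L<1$ is exactly the part you have not written; it is the hard part of the cited proof. Finally, your diagnosis of the boundary is off: as $s\to0$ the map $\Phi_{k,m}$ becomes nearly constant ($\equiv e^{-2\beta}$ at $k=0$), i.e.\ \emph{most} contractive, and the chain's exact anchor $r_{m-2k}(0)=f_\beta(0)$ already controls small $k$; the separate ``cumulant expansion'' regime for $k\lesssim\sqrt m$ is both unjustified as written (no uniform error control is given for it) and unnecessary.
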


In the  following lemma, we summarize some properties of  critical points of the function $ H_{\beta}(t)    + 2Bt $, which plays a key role in the formula of $\psi(\beta,B)$. 
\begin{lem} \label{lhbt}
  Let $H_{\beta}(t)$ be the function defined in Theorem \ref{ttd} (i). The following statements hold.
\begin{itemize}
\item[(i)] For $\beta \geq 0$ and $B>0$, the equation $\partial_t H_{\beta}(t) + 2B=0$ has a unique solution $t_*=t_*(\beta,B) \in (\tfrac{1}{2}, 1)$.
\item[(ii)] For $\beta > \beta_c$, the equation $\partial_t H_{\beta}(t)=0$ has   a unique solution $t_+=t(\beta) \in (\tfrac{1}{2}, 1)$. Moreover, as $B \searrow 0$, we have $t_* \rightarrow t_+$. 
\item[(iii)] As $\beta \searrow \beta_c$, we have $t_+ \rightarrow \tfrac{1}{2}$.
\item[(iv)] For $\beta < \beta_c$, as $B \searrow 0$, we have $t_* \rightarrow \tfrac{1}{2}$. 
\end{itemize}
\end{lem}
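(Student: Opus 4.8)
The plan is to reduce all four assertions to the study of the single function $g_\beta(t):=\partial_t H_\beta(t)$ on the interval $(1/2,1)$. First I would record the explicit derivative. Since $t\wedge(1-t)=1-t$ there, we have $\partial_t F_\beta(t)=-\log f_\beta(1-t)$, and differentiating the entropy terms gives
\[
g_\beta(t)=\partial_t H_\beta(t)=\log\frac{1-t}{t}-d\log f_\beta(1-t),\qquad t\in\Big(\tfrac12,1\Big).
\]
From $f_\beta(0)=e^{-2\beta}$ and $f_\beta(1/2)=1$ one reads off the boundary behaviour $g_\beta(t)\to-\infty$ as $t\to1^-$, while $g_\beta(1/2)=0$ (also immediate from the symmetry of $H_\beta$ about $1/2$). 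A short computation of $f_\beta'(1/2)=2(1-e^{-2\beta})$ then yields the pivotal identity
\[
\partial_t^2 H_\beta\Big(\tfrac12\Big)=2d\big(1-e^{-2\beta}\big)-4,
\]
which is negative, zero, or positive according as $\beta<\beta_c$, $\beta=\beta_c$, or $\beta>\beta_c$: this is precisely where the value $\beta_c=\tfrac12\log\frac{d}{d-2}$ enters.

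The heart of the argument is a shape lemma for $g_\beta$ on $(1/2,1)$. I claim that $\partial_t^2 H_\beta$ has at most one zero there, and when it has one it passes from $+$ to $-$; equivalently, $g_\beta$ is strictly decreasing on $(1/2,1)$ when $\beta\le\beta_c$, and is strictly increasing then strictly decreasing (a single interior maximum) when $\beta>\beta_c$, in all cases running from $0$ down to $-\infty$. To prove this I would exploit the quadratic relation satisfied by $f_\beta$, namely
\[
(1+w)f_\beta^2-2e^{-2\beta}w\,f_\beta-(1-w)=0,\qquad w=1-2s,
\]
which produces a manageable closed form for $f_\beta'/f_\beta$ and hence for $\partial_t^2 H_\beta(t)=-\tfrac{1}{t(1-t)}+d\,\dfrac{f_\beta'(1-t)}{f_\beta(1-t)}$; the single–sign–change property then becomes an explicit algebraic inequality in $s$ and $\beta$.

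Granting the shape lemma, the four assertions follow by elementary calculus. For (i), on $[1/2,1)$ the function $g_\beta+2B$ equals $2B>0$ at $1/2$ and tends to $-\infty$ at $1$, so it has a root; since $g_\beta\ge0$ on the increasing branch, any root of $g_\beta=-2B<0$ lies on the decreasing branch where $g_\beta'<0$, so the root is unique. For (ii), when $\beta>\beta_c$ the inequality $\partial_t^2 H_\beta(1/2)>0$ makes $g_\beta$ positive just to the right of $1/2$, and since $g_\beta\to-\infty$ it must cross zero; unimodality makes this crossing unique and places it after the maximum, on the strictly decreasing branch. There $t\mapsto g_\beta(t)$ is a bijection onto $(-\infty,0)$, so $t_*=g_\beta^{-1}(-2B)\to g_\beta^{-1}(0^-)=t_+$ as $B\searrow0$. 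Assertion (iv) uses the same bijection argument, but now $\beta<\beta_c$ forces $g_\beta$ to be strictly decreasing on all of $(1/2,1)$ from $0$ to $-\infty$, so $t_*=g_\beta^{-1}(-2B)\to1/2$ as $B\searrow0$. Finally (iii) is a continuity/compactness argument: if $t_+(\beta)\not\to1/2$ along some sequence $\beta\searrow\beta_c$, a subsequential limit $t^\ast>1/2$ would satisfy $g_{\beta_c}(t^\ast)=0$, but the shape lemma at $\beta=\beta_c$ — where the interior maximum has merged into $1/2$, since $\partial_t^2 H_{\beta_c}(1/2)=0$ — gives $g_{\beta_c}<0$ throughout $(1/2,1)$, a contradiction.

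I expect the main obstacle to be the shape lemma, i.e.\ showing that $\partial_t^2 H_\beta$ changes sign at most once on $(1/2,1)$. This is the only point at which the explicit and somewhat unwieldy form of $f_\beta$ must be confronted directly; once the monotonicity/unimodality picture is established, the limiting statements (ii)--(iv) are soft consequences of the intermediate value theorem and inversion of a monotone function.
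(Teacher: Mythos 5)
Your reduction of all four parts to a single claim about $g_\beta:=\partial_t H_\beta$ on $(\tfrac12,1)$ is correctly organized: the boundary values $g_\beta(\tfrac12)=0$ and $g_\beta(t)\to-\infty$ as $t\to1^-$ are right, the pivotal identity $\partial_{tt}H_\beta(\tfrac12)=2d(1-e^{-2\beta})-4$ with its sign change exactly at $\beta_c$ is right, and your compactness/continuity argument for (iii) is essentially the paper's own proof of that part (which is the only part the paper proves directly, the others being quoted from Claims $1^*$, 2a and 2b of \cite{C}). The genuine gap is that your ``shape lemma'' is never proved, and it is exactly where the analytic content of (i), (ii) and (iv) lives. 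For $\beta\le\beta_c$ the needed strict monotonicity of $g_\beta$ (strict concavity of $H_\beta$) does follow from inequality \eqref{ibc}, which reduces to $1\ge 4t(1-t)$ at $\beta=\beta_c$ and holds a fortiori for $\beta<\beta_c$; this is how Lemma \ref{lbbc} argues, and it would rescue (iv) and the subcritical cases. But for $\beta>\beta_c$ the unimodality of $g_\beta$ --- equivalently, that $\partial_{tt}H_\beta$ changes sign exactly once, from $+$ to $-$, on $(\tfrac12,1)$ --- is a nontrivial property of the specific function $f_\beta$, and you only gesture at it (``a manageable closed form \dots\ an explicit algebraic inequality'') without exhibiting any inequality. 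Writing $\partial_{tt}H_\beta(t)=-P(t)/Q(t)$ with $Q>0$ on $(\tfrac12,1)$ as in \eqref{ypb}, one finds $P(\tfrac12)<0$ for $\beta>\beta_c$ while $P(t)\to e^{-4\beta}>0$ as $t\to1$, so a sign change exists; but the \emph{single}-crossing property of $P$ is precisely the claim that needs proof, and uniqueness in (i), uniqueness in (ii), and your inverse-function argument for $t_*\to t_+$ all stand or fall with it. A self-contained proof must supply this step; as written, the hard half of the lemma has been deferred rather than established.

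A secondary point you should make explicit: uniqueness in (i) can only mean uniqueness within $(\tfrac12,1)$. Since $H_\beta$ is symmetric about $\tfrac12$, we have $\partial_t H_\beta(1-t)=-\partial_t H_\beta(t)$, so for $\beta>\beta_c$ and $2B$ smaller than the interior maximum of $g_\beta$ the equation $\partial_t H_\beta(t)+2B=0$ has additional solutions in $(0,\tfrac12)$ (the perturbed second well). Your tacit restriction to $(\tfrac12,1)$ is therefore forced, not optional, and should be stated; granted the shape lemma, the argument you give inside $(\tfrac12,1)$ is correct.
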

\begin{proof}
Part (i) is proved in Claim $1^*$ in \cite[Section 4]{C}. Parts (ii) and (iv) are Claims  2a and 2b   in \cite[Section 4]{C}. We now prove (iii) by  contradiction.
 Suppose   that $t_+(\beta) $ does not converges to $ \tfrac{1}{2}$ as $\beta \searrow \beta_c$. Then there exist $\varepsilon >0$ and a sequence $(\beta_i) \searrow \beta_c$, such that $|t_+(\beta_i)-\tfrac{1}{2}| \geq \varepsilon$.  We observe that  the sequence $(t_+(\beta_i))$ is bounded in $(\tfrac{1}{2},1)$. Hence there exists a subsequence $(\beta_{i_k}) \searrow \beta_c$, such that the sequence $(t_+(\beta_{i_k}))$ converges to a point $x \in [\tfrac{1}{2},1]$. By the assumption on the value of $(t_+(\beta_i))$, we have $x \geq \tfrac{1}{2} + \varepsilon$. Moreover,  
\begin{displaymath}
\partial_tH_{\beta}(t) =  \log \left( \frac{1-t}{t} \right) +d \partial_tF_{\beta}(t) = \left \{ \begin{array}{ll}
\log \left( \frac{1-t}{t} \right) +\log f_{\beta}(t)  & \textrm{ if } \quad t \in [0, \tfrac{1}{2})   \\
\log \left( \frac{1-t}{t} \right) - \log f_{\beta}(1-t)  & \textrm{ if } \quad t \in (\tfrac{1}{2}, 1].
\end{array} \right.
\end{displaymath}
Since  $\log f_{\beta}(\tfrac{1}{2}) = 0$, we have $\partial_tH_{\beta}(\tfrac{1}{2}^+)=\partial_tH_{\beta}(\tfrac{1}{2}^-)$. Hence, the function $H_{\beta}(\cdot)$ is  differentiable at the point $\tfrac{1}{2}$. In addition, the function $f_{\beta}(t) $  is jointly continuous   at every point  $(t, \beta)$ with $t \leq \tfrac{1}{2}$. Hence,  the function $\partial_tH_{\beta}(t)$ is jointly continuous. Therefore,
\begin{equation*}
0=\lim \limits_{k \rightarrow \infty} \partial_tH_{\beta_{i_k}} \left(t_+(\beta_{i_k})\right)  = \partial_tH_{\beta_c}(x).
\end{equation*}
This leads to a contradiction, since   by Lemma \ref{lbbc} below the equation $\partial_tH_{\beta_c}(t)=0$ has a unique solution $t=\tfrac{1}{2}$.
\end{proof}
The behavior of the function $H_{\beta_c}(t)$ around the extreme point $t=\tfrac{1}{2}$  is described in the following result,  by using Taylor expansion.
 \begin{lem} \label{lbbc}
   Let us consider  $H(t)=H_{\beta_c}(t)$ with $H_{\beta}(t)$ as in Theorem \ref{ttd}. Then we have 
\begin{equation} \label{moh}
\max_{0 \leq t \leq 1} H(t) =H(\tfrac{1}{2}).
\end{equation} 
 Moreover, 
\begin{equation} \label{bph}
H'(\tfrac{1}{2})=H''(\tfrac{1}{2})=H'''(\tfrac{1}{2})=0,
\end{equation} 
 and 
 \begin{equation} \label{hpb}
 H^{(4)} (\tfrac{1}{2}) = \frac{-32 (d-1)(d-2)}{d^2} <0.
 \end{equation}
 \end{lem}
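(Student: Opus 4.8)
The plan is to combine a symmetry of $H_\beta$ about $t=\tfrac12$ with a fourth-order Taylor expansion there, and then to upgrade the resulting local maximum to a global one. I would begin by recording that $H_\beta(t)=H_\beta(1-t)$ on $[0,1]$: the entropy term $(t-1)\log(1-t)-t\log t$ is invariant under $t\mapsto 1-t$, and $F_\beta(t)=\int_0^{t\wedge(1-t)}\log f_\beta$ depends on $t$ only through $t\wedge(1-t)$, which is itself symmetric. Hence $H:=H_{\beta_c}$ is an even function of $u:=t-\tfrac12$, so all of its odd derivatives vanish at $\tfrac12$; in particular $H'(\tfrac12)=H'''(\tfrac12)=0$, which already supplies two of the three identities in \eqref{bph}.

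Next I would compute the two even coefficients that matter. Setting $a:=e^{-2\beta}$, a short differentiation of the explicit formula for $f_\beta$ gives $f_\beta(\tfrac12)=1$ and $f'_\beta(\tfrac12)=2(1-a)$. On $(\tfrac12,1)$ one has $\partial_t H_\beta(t)=\log\frac{1-t}{t}-d\log f_\beta(1-t)$ (and its mirror image on $(0,\tfrac12)$); differentiating once more and evaluating at $t=\tfrac12$ yields $H''(\tfrac12)=-4+d\,f'_\beta(\tfrac12)=-4+2d(1-a)$. At the critical point $a=e^{-2\beta_c}=(d-2)/d$, so $1-a=2/d$ and $H''(\tfrac12)=0$, completing \eqref{bph}. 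For \eqref{hpb} I would push the expansion to fourth order. The identity $f_\beta(s)f_\beta(1-s)=1$ (a one-line check from the explicit formula) shows that $\log f_\beta$ is antisymmetric about $\tfrac12$, so in the variable $v:=s-\tfrac12$ only odd powers occur; expanding the radical gives $\log f_{\beta_c}(s)=2bv+(4b^2-\tfrac43 b^3)v^3+O(v^5)$ with $b:=1-a=2/d$. Substituting $s=1-t=\tfrac12-u$ and combining with $\log\frac{1-t}{t}=-4u-\tfrac{16}{3}u^3+O(u^5)$ gives $\partial_t H(\tfrac12+u)=\tfrac16 H^{(4)}(\tfrac12)\,u^3+o(u^3)$, and reading off the cubic coefficient produces $H^{(4)}(\tfrac12)=-32(d-1)(d-2)/d^2$, which is strictly negative for $d\ge 3$. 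This is the computational core of the lemma; it is routine, but the fourth-order bookkeeping must be done with care.

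Finally, for the maximality statement \eqref{moh}, the expansion shows $H(t)-H(\tfrac12)\sim\tfrac1{24}H^{(4)}(\tfrac12)(t-\tfrac12)^4<0$ near $\tfrac12$, so $\tfrac12$ is a strict local maximum. Promoting this to a global maximum is the step I expect to be the main obstacle, since the Taylor expansion only controls a neighborhood of $\tfrac12$. By symmetry it suffices to show $\partial_t H(t)<0$ for all $t\in(\tfrac12,1)$, equivalently (with $s=1-t$) the inequality $\frac{s}{1-s}<f_{\beta_c}(s)^d$ on $(0,\tfrac12)$. I would attack this by studying the sign of $\phi(s):=d\log f_{\beta_c}(s)-\log\frac{s}{1-s}$, which satisfies $\phi(0^+)=+\infty$, $\phi(\tfrac12^-)=0$, and $\phi>0$ just to the left of $\tfrac12$ (from the cubic term above); showing $\phi>0$ on all of $(0,\tfrac12)$ — either through a monotonicity analysis of $\phi$ using the explicit $f_{\beta_c}$, or by invoking the characterization of the critical points of $H_\beta$ from \cite{C} together with the fact that $f_\beta(s)$ is strictly decreasing in $\beta$ for $s<\tfrac12$ — rules out any further critical point. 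Combined with $H(0)=H(1)=0<H(\tfrac12)$, this yields the global maximum at $\tfrac12$ and simultaneously the uniqueness of the critical point invoked in Lemma \ref{lhbt}(iii).
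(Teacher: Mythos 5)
Your computation of the derivatives is correct and is in fact organized more cleanly than the paper's. The paper writes $H=I+dF$, verifies by hand that the one-sided limits of $F'$, $F''$, $F'''$, $F^{(4)}$ at $\tfrac12$ agree (so that $F$ is $C^4$ there), and computes $f'(\tfrac12)=2(1-c)$, $f''(\tfrac12)=4(1-c)^2$, $f'''(\tfrac12)=24(1-c)^2$ by the quotient rule, arriving at the same values \eqref{bph}--\eqref{hpb}. Your two symmetry observations --- $H_\beta(t)=H_\beta(1-t)$, and the identity $f_\beta(s)f_\beta(1-s)=1$, which is indeed a one-line rationalization --- shortcut much of this: the identity shows that $\partial_t H_\beta(t)=\log\frac{1-t}{t}+d\log f_\beta(t)$ is a single smooth formula valid on both sides of $\tfrac12$, so the piecewise definition of $F_\beta$ causes no differentiability problem, and evenness kills the odd derivatives. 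Your cubic coefficient $(4b^2-\tfrac43 b^3)$ for $\log f_{\beta_c}$ and the resulting $H^{(4)}(\tfrac12)=-32(d-1)(d-2)/d^2$ agree with the paper's values.

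The genuine gap is \eqref{moh}, exactly where you anticipated it. The paper does not argue via positivity of your $\phi$ near each point; it proves that $H$ is concave on \emph{all} of $[0,1]$: following Claim 2b of \cite{C}, $H''(t)\le 0$ is implied by the inequality \eqref{ibc}, and at $\beta=\beta_c$, where $e^{-2\beta}=(d-2)/d$, that inequality collapses to $4t(1-t)\le 1$, which holds trivially. Concavity together with $H'(\tfrac12)=0$ then gives the global maximum (and, since the inequality is strict for $t\ne\tfrac12$, also the uniqueness of the critical point used in Lemma \ref{lhbt}(iii)). Neither of your two suggested routes, as stated, closes this gap. The ``monotonicity analysis of $\phi$'' \emph{is} the missing concavity computation --- $\phi'(s)\le 0$ on $(0,\tfrac12)$ is exactly the statement $H''\le 0$ --- so announcing it as a plan does not discharge it. The comparison-in-$\beta$ route has a direction problem: since $f_\beta(s)$ decreases in $\beta$ for $s<\tfrac12$, one has $\phi_{\beta_c}<\phi_\beta$ for $\beta<\beta_c$, so subcritical positivity of $\phi_\beta$ transfers to $\beta_c$ only as the non-strict bound $\phi_{\beta_c}\ge 0$ via the limit $\beta\nearrow\beta_c$; comparison from supercritical $\beta$ fails outright, because there $\phi_\beta$ changes sign at $s=1-t_+$. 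The limiting version $\phi_{\beta_c}\ge 0$ would suffice for \eqref{moh}, but it presupposes the subcritical statement on all of $(0,\tfrac12)$ --- which in \cite{C} is itself proved by the same inequality \eqref{ibc} --- and it does not deliver the strict positivity you claim for uniqueness. The clean fix is to add the one-line concavity verification at $\beta_c$: with it, your write-up becomes a complete and somewhat slicker proof than the paper's.
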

\begin{proof}
Using the same arguments for Claim 2b in \cite[Section 4]{C}, we have $H''(t) \leq 0$ is a consequence of the following
\begin{equation} \label{ibc}
e^{-4 \beta } \big[(d-2)^2(t-t^2)+d-1 \big] \geq t(1-t)(d-2)^2.
\end{equation}
Since $\beta=\beta_c$,
\begin{equation} \label{cdd}
c:=e^{-2 \beta}= \frac{d-2}{d}.
\end{equation}
Hence \eqref{ibc} is equivalent to 
\begin{equation*}
(d-2)^2(t-t^2)+d-1 \geq d^2(t-t^2),
\end{equation*}
or equivalently,
\begin{equation*}
 1 \geq 4 t(1-t)
\end{equation*}
which holds for all $t \in [0,1]$. Hence the function $H(t)$ is  concave. Moreover, by a simple computation we have $H'(\tfrac{1}{2})=0$. Therefore $H(t)$ gets the maximum at $t=\tfrac{1}{2}$.  Now we prove \eqref{bph} and \eqref{hpb}. We observe that 
\begin{equation} \label{hsf}
H(t)=I(t)+dF(t),
\end{equation}
where 
\[I(t)=(t-1) \log (1-t) -t \log t,\]
and $F(t) = F_{\beta_c}(t)$ is defined in Theorem \ref{ttd}. We have 
\begin{eqnarray*}
I'(t)&=&  \log \left( \frac{1-t}{t} \right), \hspace{2 cm} I''(t)= \frac{-1}{t(1-t)}, \\
I'''(t) &=& \frac{1}{t^2} - \frac{1}{(1-t)^2}, \hspace{2 cm} I^{(4)}(t) =  \frac{2}{(t-1)^3} -\frac{2}{t^3}.
\end{eqnarray*}
 Hence 
 \begin{eqnarray} \label{doS}
 I'(\tfrac{1}{2})= I'''(\tfrac{1}{2})= 0, \hspace{0.9 cm} I''(\tfrac{1}{2})= -4, \hspace{0.9 cm} I^{(4)}(\tfrac{1}{2})=-32.
 \end{eqnarray}
On the other hand, 
\begin{displaymath}
F'(t)= \left \{ \begin{array}{ll}
\log f(t)  & \textrm{ if } \quad t \in [0, \tfrac{1}{2})   \\
- \log f(1-t)  & \textrm{ if } \quad t \in (\tfrac{1}{2},1],  
\end{array} \right.
\end{displaymath}
with 
$$f(t)=f_{\beta_c}(t).$$
In addition, $f(\tfrac{1}{2})=1$, so  $F'(\tfrac{1}{2}^+)=F'(\tfrac{1}{2}^-)=0$. Hence $F'(\tfrac{1}{2})=0$ and  $F$ is a $C^{1}$ function on  $(0,1)$. Furthermore,
\begin{displaymath}
F''(t)= \left \{ \begin{array}{ll}
\frac{ f'(t)}{f(t)}  & \textrm{ if } \quad t \in [0, \tfrac{1}{2}) \\
\frac{f'(1-t)}{f(1-t)}  & \textrm{ if } \quad t \in (\tfrac{1}{2},1].  
\end{array} \right.
\end{displaymath}
Therefore, $F''(\tfrac{1}{2}^+)=F''(\tfrac{1}{2}^-)$. Hence, $F''(\tfrac{1}{2})$ exists and   $F$ is a $C^{2}$ function on  $(0,1)$. Similarly,
\begin{displaymath}
F'''(t)= \left \{ \begin{array}{ll}
\frac{ f''(t)f(t) -(f'(t))^2}{f^2(t)}  & \textrm{ if } \quad t \in [0, \tfrac{1}{2})   \\
\frac{  (f'(1-t))^2-f''(1-t)f(1-t)}{f^2(1-t)}  & \textrm{ if } \quad t \in (\tfrac{1}{2}, 1].  
\end{array} \right.
\end{displaymath}
Thus $F'''(\tfrac{1}{2}^+)=F'''(\tfrac{1}{2}^-)=0$, so $F'''(\tfrac{1}{2})=0$ and  $F$ is a $C^{3}$ function on  $(0,1)$. Moreover,
\begin{displaymath}
F^{(4)}(t)= \left \{ \begin{array}{ll}
\frac{ f'''(t)f(t) -f'(t)f''(t)}{f^2(t)} - \frac{2 f'(t) [f''(t)f(t) -(f'(t))^2]}{f^3(t)}  & \textrm{ if } \quad t \in [0, \tfrac{1}{2})   \\
\frac{ f'''(1-t)f(1-t) -f'(1-t)f''(1-t)}{f^2(1-t)} - \frac{2 f'(1-t) [f''(1-t)f(1-t) -(f'(1-t))^2]}{f^3(1-t)}  & \textrm{ if } \quad t \in (\tfrac{1}{2},1]. 
\end{array} \right.
\end{displaymath}
Hence, $F^{(4)}(\tfrac{1}{2}^+)=F^{(4)}(\tfrac{1}{2}^-)$, so $F^{(4)}(\tfrac{1}{2})$ exits and  $F$ is a $C^{4}$ function on  $(0,1)$. We now compute the values $F''(\tfrac{1}{2})$ and $F^{(4)}(\tfrac{1}{2})$. Observe that
\[f(t)= \frac{A(t)}{B(t)},\]
where 
\[A(t)= c(1-2t) + \sqrt{1+(c^2-1)(2t-1)^2} \hspace{1 cm} \textrm{and} \hspace{1 cm} B(t)= 2(1-t),\]
with $c$ as in \eqref{cdd}. Hence 
\begin{eqnarray*}
f'(t)&=& \frac{A'(t)}{B(t)} - \frac{A(t)B'(t)}{B^2(t)}, \\
f''(t)&=& \frac{A''(t)}{B(t)} - \frac{A(t)B''(t) + 2 A'(t) B'(t)}{B^2(t)} + \frac{2 A(t)(B'(t))^2}{B^3(t)},\\
f'''(t)&=& \frac{A'''(t)}{B(t)} - \frac{ A(t)B'''(t) + 3 A'(t) B''(t) +3 A''(t)B'(t)}{B^2(t)} \\
&& + \frac{6 A(t) B'(t) B''(t) + 6 A'(t)(B'(t))^2}{B^3(t)} - \frac{6 A(t)(B'(t))^3}{B^4(t)}.  
\end{eqnarray*}
After some computations, we get
\begin{equation*}
A(\tfrac{1}{2})=1, \hspace{0.8 cm} A'(\tfrac{1}{2})=-2c, \hspace{0.8 cm} A''(\tfrac{1}{2})=4(c^2-1), \hspace{0.8 cm} A'''(\tfrac{1}{2})=0,
\end{equation*}
and 
\begin{equation*}
B(\tfrac{1}{2})=1, \hspace{0.8 cm} B'(\tfrac{1}{2})=-2, \hspace{0.8 cm} B''(\tfrac{1}{2})=B'''(\tfrac{1}{2})=0. 
\end{equation*}
Thus 
\begin{equation*}
f(\tfrac{1}{2})=1, \hspace{0.6 cm} f'(\tfrac{1}{2})=2(1-c), \hspace{0.6 cm} f''(\tfrac{1}{2})=4(1-c)^2, \hspace{0.6 cm} f'''(\tfrac{1}{2})=24(1-c)^2. 
\end{equation*}
Therefore 
\begin{equation} \label{doF}
F'(\tfrac{1}{2})= F'''(\tfrac{1}{2})=0, \hspace{0.4 cm} F''(\tfrac{1}{2})=2(1-c), \hspace{0.4 cm} F^{(4)}(\tfrac{1}{2})=24(1-c)^2 -8(1-c)^3. 
\end{equation}
Combining \eqref{cdd}, \eqref{hsf},  \eqref{doS} and \eqref{doF}, we obtain  desired results.
\end{proof}

\section{Proof of Theorem \ref{cexp}}
We have proved in \cite[Section 4, Claim $1^*$]{C} that for all $\beta \geq 0$ and $B>0$, 
\begin{equation*}
\psi(\beta,B) = \beta d/2 - B + L(t_*,\beta, B),
\end{equation*}
where 
$$L(t,\beta,B) = H_{\beta}(t) +2 Bt,$$
and $t_*=t_*(\beta,B) \in (\tfrac{1}{2},1)$ is the unique zero of the function $\partial_t L(t, \beta, B)$, i.e. 
\begin{equation} \label{hb}
\partial_t L(t_*, \beta, B) =\partial_t H_{\beta}(t_*)+2B=0.
\end{equation}
\subsection{Proof of $\boldsymbol{\delta} =3$}
We have shown in \cite[Section 4]{C} that  for all $\beta \geq 0$ and $B >0$, 
\[\kM(\beta,B)= \frac{\partial}{\partial B} \psi(\beta, B) = -1 +2 t_*,\]
where $t_*$ is the solution of \eqref{hb}.  By Lemma \ref{lhbt} (i) and (iii) we have $t_* \searrow \tfrac{1}{2} $ as $B \searrow 0$. We set 
\[s_*=t_*-\tfrac{1}{2}.\]
Hence 
\begin{equation*} \label{sbo}
s_* \searrow 0 \quad \textrm{as} \quad B \searrow 0.
\end{equation*}
We notice 	also that  for $t > \tfrac{1}{2}$,
\begin{equation} \label{ehp}
\partial_t H_{\beta}(t) = \log \left(\frac{1-t}{t} \right) - d \log f_{\beta}(1-t), 
\end{equation}
with
\[f_{\beta}(1-t)= \frac{e^{-2 \beta} (2t-1) + \sqrt{1+(e^{-4\beta}-1)(2t-1)^2}}{2t}.\]
Therefore the equation \eqref{hb} is equivalent to the following
\begin{equation} \label{mm}
2B=  - \log \theta_1(s_*) + d\log \theta_2(s_*),
\end{equation}
where 
\begin{equation} \label{tms}
\theta_1(s) =\frac{1-2s}{1+2s}, 
\end{equation}
and
\begin{equation} \label{ths}
\theta_2(s) = f_{\beta}(1-s)=  \frac{2e^{-2 \beta}s + \sqrt{1+4(e^{-4\beta}-1)s^2}}{1+2s}.
\end{equation}
We  have 
\begin{equation*}
\theta_1(s_*) -1 = \frac{-4 s_*}{1+2s_*},
\end{equation*}
and 
\begin{equation*}
\theta_2(s_*) -1 = \frac{2s_*(e^{-2 \beta} -1) + \sqrt{1+4(e^{-4\beta}-1)s_*^2} -1}{1+2s_*}.
\end{equation*}
Using  Taylor expansion, we get 
\begin{equation*} \label{tas}
\frac{1}{1+2s_*} = 1-2s_* +4s_*^2 -8 s_*^3 + \kO(s_*^4)
\end{equation*}
and 
\begin{equation*} \label{tac}
\sqrt{1 + 4(e^{-4 \beta} -1) s_*^2} -1 = 2(e^{-4 \beta} -1) s_*^2  + \kO(s_*^4).
\end{equation*}
Therefore
\begin{eqnarray*} 
\theta_1(s_*) -1 &=& -4s_* +8 s_*^2-16s_*^3 + \kO(s_*^4),  \label{tmss}\\
(\theta_1(s_*) -1)^2 &=& 16s_*^2 -64 s_*^3  + \kO(s_*^4), \label{tmssh} \\
(\theta_1(s_*) -1)^3 &=&  -64 s_*^3  + \kO(s_*^4), \label{tmssb}
\end{eqnarray*}
and 
\begin{eqnarray*}
\theta_2(s_*) -1 & = & 2(e^{-2\beta}-1) s_* + 2(e^{-2\beta}-1)^2s_*^2 - 4(e^{-2\beta}-1)^2s_*^3 +\kO(s_*^4), \\
(\theta_2(s_*) -1)^2 &=& 4(e^{-2\beta}-1)^2s_*^2 +8(e^{-2\beta}-1)^3s_*^3 +\kO(s_+^4), \\
(\theta_2(s_*) -1)^3 &=& 8(e^{-2\beta}-1)^3s_*^3 +\kO(s_*^4).
\end{eqnarray*}
Combining these equations and Taylor expansion, we have  
\begin{eqnarray} 
\log \theta_1(s_*) &=& \theta_1(s_*) -1 + \frac{-(\theta_1(s_*) -1)^2}{2} +  \frac{(\theta_1(s_*) -1)^3}{3} + \kO( (\theta_1(s_*) -1)^4) \notag \\
&=& -4 s_* - \frac{16}{3} s_*^3 + \kO( s_*^4), \label{ltm}
\end{eqnarray}
and
\begin{eqnarray}
\log \theta_2(s_*) &=& \theta_2(s_*) -1 + \frac{-(\theta_2(s_*) -1)^2}{2} +  \frac{(\theta_2(s_*) -1)^3}{3} + \kO( s_*^4) \notag \\
&=&2(e^{-2\beta}-1)s_* -\frac{4}{3} (e^{-2\beta}-1)^2(e^{-2\beta}+2) s_*^3 + \kO( s_*^4). \label{lthh} 
\end{eqnarray}
In this subsection, we consider
\[\beta =\beta_c = \frac{1}{2}\log \left(  \frac{d}{d-2} \right) \hspace{1 cm} \textrm{or} \hspace{1 cm}e^{-2 \beta} = \frac{d-2}{d}.\]
Hence 
\begin{eqnarray*}
\log \theta_2(s_*)&=& -\frac{4}{d} s_* + \left( \frac{32}{3d^3} - \frac{16}{d^2} \right) s_*^3 + \kO( s_*^4).  \label{lth}
\end{eqnarray*} 
Therefore,
\begin{eqnarray*} 
-\log \theta_1(s_*) + d \log \theta_2(s_*) &=& \frac{16(d-1)(d-2)}{3d^2}s_*^3+ \kO(s_*^4).  
\end{eqnarray*}
Combining this with \eqref{mm}, we get 
\[ B=  \frac{8(d-1)(d-2)}{3d^2}s_*^3+ \kO(s_*^4). \]
Thus as $B \searrow 0$, 
\[\kM(\beta,B)=2s_* \asymp B^{1/3}.\]
Therefore 
\[\boldsymbol{\delta} =3.\]
\subsection{Proof of $\boldsymbol{\beta} =\tfrac{1}{2}$}
Suppose that $\beta > \beta_c$.  We have proved in    \cite[Claim 2a]{C} that  
\[\kM(\beta,0^+)= -1+2t_+,\]
where $t_+ = t_+(\beta) \in (\tfrac{1}{2},1)$ is the root of  $ \partial_t H_{\beta}(t)$. Moreover, by Lemma \ref{lhbt} (ii) and (iii) we have $t_+ \searrow \tfrac{1}{2}$ as $\beta \searrow \beta_c$. We set 
\[s_+=t_+ -\tfrac{1}{2}.\] 
Then
\begin{equation} \label{kmbe}
\kM(\beta,0^+)= 2s_+,
\end{equation}
and  $s_+$ is the positive solution of the equation $\partial_t H_{\beta} \left(s_++\tfrac{1}{2}\right)=0$. From \eqref{ehp}, we can write this equation
\begin{equation} \label{mdh}
\log \theta_1(s_+)= d \log \theta_2(s_+),
\end{equation}
with $\theta_1(s)$ and $\theta_2(s)$ as in \eqref{tms} and \eqref{ths}. Using similar arguments and calculations for \eqref{ltm} and \eqref{lthh},   we get
\begin{eqnarray*}
\log \theta_1(s_+) &=& -4s_+ -\frac{16}{3} s_+^3 + \kO( s_+^4),
\end{eqnarray*}
and 
\begin{eqnarray*}
\log \theta_2(s_+) &=& 2(e^{-2\beta}-1)s_+ -\frac{4}{3} (e^{-2\beta}-1)^2(e^{-2\beta}+2) s_+^3 + \kO( s_+^4).
\end{eqnarray*}
Hence, for any $\varepsilon >0$, there exists $\delta >0$, such that for all $\beta_c < \beta < \beta_c + \delta$,
\begin{eqnarray} \label{btm}
 -4s_+ - \left( \frac{16}{3} + \varepsilon \right) s_+^3 \leq \log \theta_1(s_+) \leq  -4s_+ - \left( \frac{16}{3} - \varepsilon\right) s_+^3,
\end{eqnarray}
and 
\begin{eqnarray} \label{uth}
\log \theta_2(s_+) \leq  2(e^{-2\beta}-1)s_+ + \left(\varepsilon -\frac{4}{3} (e^{-2\beta}-1)^2(e^{-2\beta}+2)\right) s_+^3,
\end{eqnarray}
and 
\begin{eqnarray} \label{loth}
\log \theta_2(s_+) \geq  2(e^{-2\beta}-1)s_+ - \left(\varepsilon +\frac{4}{3} (e^{-2\beta}-1)^2(e^{-2\beta}+2)\right) s_+^3.
\end{eqnarray}
Using \eqref{mdh}, \eqref{btm} and \eqref{uth}, we get 
\begin{eqnarray*}
 -4s_+ - \left( \frac{16}{3} + \varepsilon \right) s_+^3  \leq 2d(e^{-2\beta}-1)s_+ + d\left(\varepsilon -\frac{4}{3} (e^{-2\beta}-1)^2(e^{-2\beta}+2)\right) s_+^3.
\end{eqnarray*}
Therefore
\begin{eqnarray*}
-4-2d(e^{-2\beta}-1) &\leq& \left( \frac{16}{3} - \frac{4d}{3} (e^{-2\beta}-1)^2(e^{-2\beta}+2) +(d+1) \varepsilon \right)s_+^2. 
\end{eqnarray*}
We observe that as $\beta \searrow \beta_c$,
\[\frac{16}{3} - \frac{4d}{3} (e^{-2\beta}-1)^2(e^{-2\beta}+2) \longrightarrow  \frac{16}{3} - \frac{16}{3}  \frac{(3d-2)}{d^2} = \frac{16(d-1)(d-2)}{3d^2}.   \]
Moreover,
\begin{eqnarray} \label{ebc}
\frac{d-2}{d} - e^{-2\beta} = e^{-2\beta_c} -e^{-2\beta} &=&2e^{-2 \beta_c}(\beta -\beta_c) +\kO((\beta -\beta_c)^2) \notag \\
&=& 2 \left( \frac{d-2}{d}\right)(\beta -\beta_c) +\kO((\beta -\beta_c)^2).
\end{eqnarray} 
Thus
\begin{eqnarray*}
-4-2d(e^{-2\beta}-1) &=& 2d \left(\frac{d-2}{d}  -e^{-2 \beta} \right) =  4(d-2)(\beta - \beta_c)  + o((\beta - \beta_c))
\end{eqnarray*}
Hence for $\beta$ close enough to $\beta_c$,
\begin{eqnarray*}
\big( 4(d-2)- \varepsilon \big)(\beta - \beta_c) \leq \left( \frac{16(d-1)(d-2)}{3d^2} +(d+2) \varepsilon\right)s_+^2.
\end{eqnarray*}
Similarly, using \eqref{mdh}, \eqref{btm} and \eqref{loth} we can also prove that for $\beta$ close to $\beta_c$,
\begin{eqnarray*}
\big( 4(d-2)+ \varepsilon \big)(\beta - \beta_c) \geq \left( \frac{16(d-1)(d-2)}{3d^2} -(d+2) \varepsilon\right)s_+^2.
\end{eqnarray*}
It follows from last two inequalities that 
\begin{equation} \label{spb}
s_+^2 = \frac{3d^2}{4(d-1)} (\beta-\beta_c) +o((\beta-\beta_c)).
\end{equation}
Combining \eqref{kmbe} and \eqref{spb}, we get  
\[\boldsymbol{\beta}=\tfrac{1}{2}.\]
\subsection{Proof of $\boldsymbol{\gamma}=\boldsymbol{\gamma'}=1$} We have shown in \cite[Section 5]{C} that  for $B>0$
\begin{eqnarray} \label{cbb}
\chi(\beta,B)= \frac{-4}{ \partial_{tt} H_{\beta}(t_*)}.  
\end{eqnarray}
In the proof of Claim $1^*$ in \cite[Section 4]{C}, it is  shown that for all $t \in (0,1)$
\begin{equation} \label{ypb}
\partial_{tt}H_{\beta}(t)= \frac{-P(t)}{Q(t)}, 
\end{equation}
where
\[P(t) = d(1-t) \left(e^{-2 \beta}\theta_2(t)-1 \right) +e^{-2 \beta}(2t-1)\theta_2(t)+2-2t,\]
and 
\[Q(t)=t(1-t) \big(e^{-2\beta}(2t-1)\theta_2(t)+2-2t \big),\]
with $$ \theta_2(t) = f_{\beta}(1-t).$$

\vspace{0.2 cm}
\noindent {\bf Case $\beta > \beta_c$}. By Lemma \ref{lhbt} (ii), we have $t_* \rightarrow t_+$ as $B \rightarrow 0^+$, with $t_+ = t_+(\beta)$ the root  of $\partial_t H_{\beta}$.  Therefore, by \eqref{cbb}
\begin{equation} \label{cbh}
\chi(\beta,0^+)= \frac{-4}{ \partial_{tt} H_{\beta}(t_+)}.
\end{equation} 
Using \eqref{ehp}, the equation $\partial_{t}H_{\beta}(t_+)=0$ is equivalent to 
 \[d\log f_{\beta}(1-t_+)= \log \left( \frac{1-t_+}{t_+} \right),\]
or equivalently
\[\theta_2(t_+)= \left( \frac{1-t_+}{t_+} \right)^{1/d} = \left(\frac{1-2s_+}{1+2s_+}\right)^{1/d},\]
where 
$$s_+=t_+ -\tfrac{1}{2}.$$
Notice that  as $\beta \searrow \beta_c$, we have $t_+ \searrow \tfrac{1}{2}$,  thus $s_+\searrow 0$. 
By   Taylor expansion, 
\begin{eqnarray*}
\theta_2(t_+)=\left(\frac{1-2s_+}{1+2s_+}\right)^{1/d}   &=&  1- \frac{4s_+}{d} +\frac{16s_+^2}{d^2} +\kO(s_+^3).
\end{eqnarray*}
Hence 
\begin{eqnarray} \label{eoq}
Q(t_+)=(\tfrac{1}{4} - s_+^2) \big(2e^{-2\beta} \theta_2(t_+)s_+ + 1 - 2s_+ \big) &=& \tfrac{1}{4} + \kO(s_+)  \notag \\
 (\textrm{by } \eqref{spb}) \hspace{1.5cm} &=& \tfrac{1}{4}+ \kO(\sqrt{\beta - \beta_c}).
\end{eqnarray}
Similarly,
\begin{eqnarray*}
P(t_+) &=& d \left(\tfrac{1}{2} -s_+ \right) \left( e^{-2 \beta}  \theta_2(t_+)  -1 \right) +2e^{-2 \beta}\theta_2(t_+)s_+  +1 -2s_+ \\
&=&  d \left( \tfrac{1}{2} -s_+ \right) \left( e^{-2 \beta} \left( 1- \frac{4s_+}{d} +\frac{16s_+^2}{d^2}\right) -1 + \kO(s_+^3) \right) + 1 -2s_+ \\
&& +  2 e^{-2 \beta} \left( 1 - \frac{4s_+}{d} \right) s_+ + \kO(s_+^3) \\
&=&\frac{d}{2} \left( e^{-2 \beta} -\frac{d-2}{d}\right) +d \left( \frac{d-2}{d} -e^{-2\beta}\right)s_+ +4e^{-2\beta}s_+^2
 + \kO(s_+^3). 
\end{eqnarray*}
Combining this with \eqref{ebc} and \eqref{spb}, we get 
\begin{eqnarray} \label{eop}
P(t_+) &=&  \frac{(d-2)(2d+1)}{(d-1)}  (\beta - \beta_c) +o((\beta-\beta_c)).
\end{eqnarray}
It follows from \eqref{ypb}, \eqref{eoq}, \eqref{eop} that 
\begin{equation} \label{htpb}
\partial_{tt} H_{\beta}(t_+) = \frac{-4(d-2)(2d+1)}{(d-1)}  (\beta - \beta_c) +o((\beta-\beta_c)).
\end{equation}
This together with \eqref{cbh} imply  that as $\beta \searrow \beta_c$,
\[\chi(\beta,0^+) \asymp (\beta - \beta_c)^{-1},\]
or equivalently
\[\boldsymbol{\gamma'}=1.\]
\vspace{0.2 cm}
\noindent {\bf Case $\beta < \beta_c$.}   By Lemma \ref{lhbt} (iv), we have  $t_* \rightarrow \tfrac{1}{2}$ as $B \rightarrow 0^+$. Therefore, by \eqref{cbb} 
\begin{equation}
\chi(\beta,0^+)= \frac{-4}{ \partial_{tt} H_{\beta}(\tfrac{1}{2})}.
\end{equation} 
We have $\theta_2(\tfrac{1}{2})=f_{\beta}(\tfrac{1}{2}) =1$. Hence 
\begin{eqnarray*}
Q(\tfrac{1}{2}) = \tfrac{1}{4},
\end{eqnarray*}
and 
\begin{eqnarray*}
P(\tfrac{1}{2}) = \frac{d}{2} (e^{-2\beta} -1) +1 &=& \frac{d}{2}  \left(e^{-2\beta} - \frac{d-2}{d} \right). 
\end{eqnarray*}
Thus
\begin{eqnarray} \label{hbh}
\partial_{tt} H_{\beta}(\tfrac{1}{2}) = - 2d  \left(e^{-2\beta} - \frac{d-2}{d} \right). 
\end{eqnarray}
Using \eqref{ebc} and \eqref{hbh}, we have  as $\beta \nearrow \beta_c$,
\begin{eqnarray*}
\partial_{tt} H_{\beta}(\tfrac{1}{2}) = - 4(d-2) (\beta_c - \beta) + \kO((\beta_c - \beta)^2), 
\end{eqnarray*}
so we get
\[\chi(\beta,0^+) \asymp (\beta_c - \beta)^{-1},\]
and thus
\[\boldsymbol{\gamma}=1.\]
\subsection{Proof of $\boldsymbol{\alpha}=\boldsymbol{\alpha'}=0$ }
We recall  that 
\begin{equation} \label{eots}
\psi(\beta,B)= \beta d/2 - B + L(t_*,\beta, B),
\end{equation}
where $t_*= t_*(\beta, B) \in (\tfrac{1}{2},1)$ is the solution of the following equation
\begin{eqnarray} \label{lbts}
\partial_t L(t_*,\beta, B) = 0.
\end{eqnarray}
In addition, by Claim $1^*$ in \cite{C},  $\partial_{tt} L (t_*,\beta, B) \neq 0$.  Hence, $t_*$ is a differentiable function by the implicit function theorem. Taking derivative in $\beta$ of \eqref{lbts}, we get
\begin{equation*}
\partial_{t \beta} L (t_*, \beta, B) + \partial_{tt} L(t_*, \beta, B)  \partial_{\beta} t_* = 0.
\end{equation*}
Thus 
\begin{equation*}
\partial_{\beta} t_*  = - \frac{\partial_{t \beta} L (t_*, \beta, B)}{ \partial_{tt} L (t_*, \beta, B)}.
\end{equation*}
Using \eqref{eots} and \eqref{lbts}, we get
\begin{eqnarray*}
\partial_{\beta} \psi (\beta, B) = \frac{d}{2} + \partial_t L(t_*,\beta, B) \partial_{\beta} t_* + \partial_{\beta} L(t_*,\beta, B) = \frac{d}{2} + \partial_{\beta} L(t_*,\beta, B). 
\end{eqnarray*}
It follows from  the last two equations that
\begin{eqnarray}
\partial_{\beta \beta}  \psi (\beta, B) &=&  \partial_{\beta \beta} L(t_*,\beta, B) +  \partial_{t \beta } L(t_*,\beta, B) \partial_{\beta }  t_* \notag \\
 &=& \partial_{\beta \beta} L(t_*,\beta, B) - \frac{(\partial_{t\beta } L(t_*,\beta, B))^2}{\partial_{tt} L (t_*,\beta, B)}.  \label{pbbp}
\end{eqnarray}
 For $t > \tfrac{1}{2}$, we have
\begin{eqnarray} \label{patb}
\partial_{t \beta} L (t,\beta, B) = d \partial_{t \beta } F_{\beta} (t) = - d \partial_{\beta} \log f_{\beta}(1-t) = - d p_{\beta}(1-t)
\end{eqnarray}
and 
\begin{eqnarray} \label{pabb}
\partial_{\beta \beta} L (t,\beta, B) = d \partial_{\beta \beta } F_{\beta} (t) =  d \int_0^{1-t} \partial_{\beta} p_{\beta} (s) ds,
\end{eqnarray}
where 
$$p_{\beta}(s) = \frac{\partial_{\beta} f_{\beta}(s)}{f_{\beta}(s)}.$$
{\bf Case  $\beta> \beta_c$}.  By Lemma \ref{lhbt} (ii),  $t_* \rightarrow t_+$ as $B \rightarrow 0^+$. Hence using \eqref{pbbp}, we have 
\begin{eqnarray} \label{pptc}
\partial_{\beta \beta}  \psi (\beta, 0^+) &=& \partial_{\beta \beta} L(t_+, \beta, 0) - \frac{(\partial_{t\beta } L(t_+, \beta, 0))^2}{\partial_{tt} L (t_+, \beta, 0)}. 
\end{eqnarray}
By \eqref{htpb},  as $\beta \searrow \beta_c$
\begin{equation} \label{ptttc}
\partial_{tt} L (t_+, \beta, 0) = \partial_{tt} H_{\beta} (t_+)  =  - \frac{4(d-2)(2d+1)}{d-1} (\beta - \beta_c) +o(\beta - \beta_c).
\end{equation}
By direct calculations,  we can show that
\begin{eqnarray} \label{epbs}
p_{\beta}(s) = \frac{\partial_{\beta} f_{\beta}(s)}{f_{\beta} (s)} = \frac{-2 e^{-2 \beta}(1-2s)}{\sqrt{1+ (e^{-4 \beta} -1)(1-2s)^2}}.
\end{eqnarray}
Using \eqref{spb} and \eqref{ebc}, we get  as $\beta \searrow \beta_c$,
\begin{eqnarray} \label{cebc}
e^{-2 \beta} = \frac{d-2}{d} + \kO(\beta-\beta_c), 
\end{eqnarray}
and 
\begin{eqnarray} \label{cspb}
2t_+ -1 = \sqrt{\frac{3d^2}{d-1}} \sqrt{\beta - \beta_c} + o \left(\sqrt{\beta- \beta_c} \right).
\end{eqnarray}
Using \eqref{patb}, \eqref{epbs}, \eqref{cebc}, \eqref{cspb}  we have as $\beta \searrow \beta_c$,
\begin{eqnarray*}
\partial_{t \beta} L(t_+, \beta, 0) = -d p_{\beta} (1-t_+) = -\frac{2 \sqrt{3}d(d-2)}{\sqrt{d-1}} \sqrt{\beta - \beta_c}  + o \left(\sqrt{\beta - \beta_c} \right). 
\end{eqnarray*}
Hence 
\begin{eqnarray} \label{ptbtc}
(\partial_{t \beta} L(t_+,\beta, 0))^2 =  \frac{12 d^2(d-2)^2}{d-1} (\beta - \beta_c)   + o(\beta - \beta_c). 
\end{eqnarray}
We have 
\begin{equation}
\partial_{\beta} p_{\beta}(s) = \frac{4 e^{-2 \beta} (1-2s)(1-(1-2s)^2) }{\left(\sqrt{1+ (e^{-4 \beta} -1)(1-2s)^2} \right)^3}.
\end{equation}
Therefore  using \eqref{pabb}, we obtain
\begin{eqnarray}
\partial_{\beta \beta} L(t_+, \beta, 0) &=& 4d \int_0^{1-t_+}  \frac{ e^{-2 \beta} (1-2s)(1-(1-2s)^2) }{\left(\sqrt{1+ (e^{-4 \beta} -1)(1-2s)^2} \right)^3} ds \notag \\
&=& 2d \int_{2t_+ -1}^1  \frac{ e^{-2 \beta} u(1-u^2) }{\left(\sqrt{1+ (e^{-4 \beta} -1)u^2} \right)^3} du  \notag \\
&=& 2d \int_{0}^1  \frac{ e^{-2 \beta} u(1-u^2) }{\left(\sqrt{1+ (e^{-4 \beta} -1)u^2} \right)^3} du - 2d \int^{2t_+ -1}_0  \frac{ e^{-2 \beta} u(1-u^2) }{\left(\sqrt{1+ (e^{-4 \beta} -1)u^2} \right)^3} du \notag\\
&=& J_1 - J_2. \label{jmth} 
\end{eqnarray}
We observe that
$$0 \leq \frac{ e^{-2 \beta} u(1-u^2) }{\left(\sqrt{1+ (e^{-4 \beta} -1)u^2} \right)^3} \leq e^{4 \beta}.$$
Hence 
\begin{equation*}
0 \leq J_2 \leq 2de^{4 \beta}(2t_+-1).
\end{equation*}
Combining this inequality with \eqref{cspb} yields that  as $\beta \searrow \beta_c$
\begin{equation*}
J_2=\kO(\sqrt{\beta-\beta_c}). 
\end{equation*}
On the other hand, by using \eqref{cebc}  we have
\begin{equation*}
J_1 =\int_{0}^1  \frac{ 2d^3(d-2) u(1-u^2) }{\left(\sqrt{d^2+ (4-4d)u^2} \right)^3} du + \kO(\beta- \beta_c).
\end{equation*}
Combining the last two equations and \eqref{jmth} gives that
\begin{equation} \label{pbbtc}
\partial_{\beta \beta} L(t_+, \beta, 0) =  \int_{0}^1  \frac{ 2d^3(d-2)  u(1-u^2) }{\left(\sqrt{d^2+ (4-4d)u^2} \right)^3} du + \kO(\beta- \beta_c).
\end{equation}
It follows from  \eqref{pptc}, \eqref{ptttc}, \eqref{ptbtc} and \eqref{pbbtc} that   as $\beta \searrow \beta_c$
\begin{eqnarray*}
\kC(\beta,0^+)=\partial_{\beta \beta} \psi(\beta, 0^+) &=&  \int_{0}^1  \frac{ 2d^3(d-2)  u(1-u^2) }{\left(\sqrt{d^2+ (4-4d)u^2} \right)^3} du + \frac{\frac{12d^2(d-2)^2}{d-1} (\beta - \beta_c) + o(\beta - \beta_c)}{\frac{4(d-2)(2d+1)}{d-1} (\beta - \beta_c) + o(\beta - \beta_c)} \notag \\
&=& \int_{0}^1  \frac{ 2d^3(d-2) u(1-u^2) }{\left(\sqrt{d^2+ (4-4d)u^2} \right)^3} du + \frac{3d^2(d-2)}{2d+1} +o(1).
\end{eqnarray*}
Thus
\[\boldsymbol{\alpha'}=0.\]
{\bf Case $\beta < \beta_c$}. By Lemma \ref{lhbt} (iv), we have $t_* \rightarrow \tfrac{1}{2}$ as $B \rightarrow 0^+$. Therefore, by \eqref{pbbp}
\begin{eqnarray*} \label{pptn}
\partial_{\beta \beta}  \psi (\beta, 0^+) &=& \partial_{\beta \beta} L(\tfrac{1}{2}, \beta, 0) - \frac{(\partial_{t\beta } L(\tfrac{1}{2}, \beta, 0))^2}{\partial_{tt} L (\tfrac{1}{2}, \beta, 0)}. 
\end{eqnarray*}
Using \eqref{hbh}, we obtain
\begin{equation*}
\partial_{tt} L (\tfrac{1}{2}, \beta,0) = \partial_{tt} H_{\beta} (\tfrac{1}{2}) =- 2d  \left(e^{-2\beta} - \frac{d-2}{d} \right). 
\end{equation*}
Moreover,  by \eqref{patb} and \eqref{epbs}
\begin{eqnarray*}
 \partial_{t \beta} L (\tfrac{1}{2}, \beta,0) = -dp_{\beta}(\tfrac{1}{2})=0.
\end{eqnarray*}
We have
\begin{eqnarray*}
 \partial_{\beta \beta} L (\tfrac{1}{2}, \beta,0) &=&  4d \int_0^{1/2}  \frac{ e^{-2 \beta} (1-2s)(1-(1-2s)^2) }{\left(\sqrt{1+ (e^{-4 \beta} -1)(1-2s)^2} \right)^3} ds \notag \\
&=& 2d \int_{0}^1  \frac{ e^{-2 \beta} u(1-u^2) }{\left(\sqrt{1+ (e^{-4 \beta} -1)u^2} \right)^3} du. 
\end{eqnarray*}
Combining the last four equations yields that 
\begin{eqnarray*}
\partial_{\beta \beta} \psi(\beta,0^+) =  2d \int_{0}^1  \frac{ e^{-2 \beta} u(1-u^2) }{\left(\sqrt{1+ (e^{-4 \beta} -1)u^2} \right)^3} du.
\end{eqnarray*}
Now, by using \eqref{cebc}, we get  as $\beta \nearrow \beta_c$, 
\begin{eqnarray*}
\kC(\beta,0^+)= \partial_{\beta \beta} \psi(\beta,0^+)  =  \int_{0}^1  \frac{ 2d^3(d-2)  u(1-u^2) }{\left(\sqrt{d^2+ (4-4d)u^2} \right)^3} du + o(1).
\end{eqnarray*}
Hence 
\[\boldsymbol{\alpha}=0.\]
\section{Proof of Theorem \ref{ltc}}
In this section, we use the same strategy as in the proof of  \cite[Theorem 1.3]{C} to prove our result. In particular, we show that the generating function of $(\sigma_1 + \ldots + \sigma_n)/n^{3/4}$ converges to the one of a specific random variable.  In fact,  Theorem \ref{ltc} is a direct consequence of  the following proposition.
\begin{prop} Suppose that $\beta=\beta_c$ and $B=0$. Then for all $r\in \R$, we have
\begin{equation}
\E_{\mu_n} \left( \exp\left(r \frac{\sigma_1 + \ldots + \sigma_n}{n^{3/4}}\right) \right) \longrightarrow \E\left(e^{rX}\right),
\end{equation}
where $X$ is the random variable defined in Theorem \ref{ltc}.
\end{prop}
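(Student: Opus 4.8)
The plan is to turn the generating function into a ratio of sums over the number of positive spins and then to run the Laplace method at the degenerate maximum of $H=H_{\beta_c}$. Since the measure $\mu_n(\sigma)$ in \eqref{mus} depends on $\sigma$ only through $|\sigma_+|$, and since the total spin satisfies $S_n=\sigma_1+\ldots+\sigma_n=2|\sigma_+|-n$, I would group configurations according to $j=|\sigma_+|$ and use \eqref{mus}--\eqref{ezn} with $\beta=\beta_c$, $B=0$ to obtain
\[
\E_{\mu_n}\!\left(\exp\!\left(r\frac{S_n}{n^{3/4}}\right)\right)=\frac{\sum_{j=0}^{n}\binom{n}{j}\,g_{\beta_c}(dj,dn)\,e^{r(2j-n)/n^{3/4}}}{\sum_{j=0}^{n}\binom{n}{j}\,g_{\beta_c}(dj,dn)}.
\]
Thus everything reduces to the asymptotics of $a_n(j):=\binom{n}{j}\,g_{\beta_c}(dj,dn)$, which is exactly the strategy used for the central limit theorem in \cite{C}.

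Next I would show that $\log a_n(j)$ equals $nH(j/n)$ up to errors that are asymptotically constant over the relevant range of $j$, hence cancel in the ratio. Indeed, Stirling's formula gives $\log\binom{n}{j}=nI(j/n)-\tfrac12\log n+\kO(1)$, with $I$ as in the proof of Lemma \ref{lbbc}, while Lemma \ref{lgF} gives, for a reference point $j_0$, that $\log g_{\beta_c}(dj,dn)-ndF_{\beta_c}(j/n)=c_n+\kO(|j-j_0|/n)$ for a constant $c_n$ independent of $j$. Since $H=I+dF_{\beta_c}$, combining these yields $\log a_n(j)=nH(j/n)+\kappa_n+\kO(|j-j_0|/n)$ with $\kappa_n$ free of $j$. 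The key point is that over a window $|j-n/2|=\kO(n^{3/4})$ the correction term varies by only $\kO(n^{-1/4})=o(1)$, and the slowly varying Stirling prefactor likewise tends to a constant, so all of these cancel between numerator and denominator.

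The heart of the argument is the Laplace analysis of $\sum_j e^{nH(j/n)}e^{r(2j-n)/n^{3/4}}$. By Lemma \ref{lbbc}, $H$ attains its strict maximum at $t=\tfrac12$ with $H'(\tfrac12)=H''(\tfrac12)=H'''(\tfrac12)=0$ and $H^{(4)}(\tfrac12)=-32(d-1)(d-2)/d^2$; this degeneracy is precisely what forces the scale $|j-n/2|\sim n^{3/4}$. Setting $w=(2j-n)/n^{3/4}$, so that $j/n-\tfrac12=w/(2n^{1/4})$, the Taylor expansion gives
\[
nH(j/n)=nH(\tfrac12)+\frac{n}{24}H^{(4)}(\tfrac12)\left(\frac{w}{2n^{1/4}}\right)^{4}+o(1)=nH(\tfrac12)-\frac{(d-1)(d-2)}{12d^2}\,w^4+o(1),
\]
uniformly on the localization window. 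After localizing to $|w|\le\log n$ (the tails being negligible because $H(t)<H(\tfrac12)$ away from $\tfrac12$ gives an exponentially small contribution outside a fixed neighborhood, and the quartic upper bound on $H$ controls the intermediate range), the sum becomes a Riemann sum with spacing $\Delta w=2n^{-3/4}$. The common factors $e^{nH(1/2)}$, $e^{\kappa_n}$, the Stirling prefactor and $\Delta w$ cancel in the ratio, leaving
\[
\E_{\mu_n}\!\left(\exp\!\left(r\frac{S_n}{n^{3/4}}\right)\right)\longrightarrow\frac{\int_{\R}e^{rw}\,e^{-(d-1)(d-2)w^4/(12d^2)}\,dw}{\int_{\R}e^{-(d-1)(d-2)w^4/(12d^2)}\,dw}=\E\!\left(e^{rX}\right),
\]
which is the claim. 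The main obstacle is making the Laplace method rigorous at a \emph{quartic} rather than quadratic maximum: one must control the Stirling and Lemma \ref{lgF} errors uniformly over the $n^{3/4}$-window while simultaneously showing that $j$ outside the window contribute negligibly, using the strict concavity of $H$ from Lemma \ref{lbbc}.
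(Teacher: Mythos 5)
Your proposal is correct and follows essentially the same route as the paper: reduce the generating function to a ratio of sums of $\binom{n}{j}g_{\beta_c}(dj,dn)$ over $j=|\sigma_+|$, control these terms via Stirling's formula together with Lemma \ref{lgF}, exploit the quartic degeneracy $H'(\tfrac12)=H''(\tfrac12)=H'''(\tfrac12)=0$, $H^{(4)}(\tfrac12)=-32(d-1)(d-2)/d^2$ from Lemma \ref{lbbc} to localize at scale $|j-n/2|\sim n^{3/4}$, and finish with a Laplace/Riemann-sum argument yielding the ratio of quartic integrals. The only differences are cosmetic (your localization window $n^{3/4}\log n$ versus the paper's $n^{5/6}$, and normalizing by absolute asymptotics rather than by the central term $x_{j_*}(n)$), and your error bounds ($\kO(n^{-1/4}\log n)$ for the Lemma \ref{lgF} term, $o(1)$ uniformity of the Taylor remainder) check out.
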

\begin{proof} Using \eqref{mus} and \eqref{ezn}, we have 
\begin{eqnarray}
\E_{\mu_n} \left( \exp\left(r \frac{\sigma_1 + \ldots + \sigma_n}{n^{3/4}}\right) \right) &=& \sum _{\sigma \in \Omega_n} \exp \left( \frac{ r \sum \sigma_j}{n^{3/4}}\right) \mu_n(\sigma) \notag \\
& =& \sum _{\sigma \in \Omega_n} \exp \left( \frac{ r \sum \sigma_j}{n^{3/4}}\right)\frac{g( d |\sigma_+|,dn)}{B_n },
\end{eqnarray}
where 
\[\sigma_+ = \{v_j: \sigma_j =1\},\]
and 
\begin{equation} \label{dbn}
B_n=\sum_{j=0}^n \binom{n}{j}g( dj,dn),
\end{equation}
and
\begin{equation*}
g(dj,dn)=g_{\beta_c}(dj,dn)
\end{equation*}
with the sequence $(g_{\beta}(k,m))$ as in Lemma \ref{lgF}. Therefore,
\begin{equation} \label{emun}
\E_{\mu_n} \left( \exp\left(r \frac{\sigma_1 + \ldots + \sigma_n}{n^{3/4}}\right) \right) =\frac{A_n}{B_n},
\end{equation}
with 
\begin{eqnarray}
A_n &=&   \sum _{\sigma \in \Omega_n} \exp \left( \frac{ r \sum \sigma_j}{n^{3/4}}\right) g( d |\sigma_+|,dn) \notag \\
&=&\sum_{j=0}^n \binom{n}{j}\exp \left( \frac{r(2j-n)}{n^{3/4}}\right)g( dj,dn).
\end{eqnarray}
We set
\[j_{*}=[n/2],\]
where $[x]$ stands for the integer part of $x$. Define  for $0\leq j \leq n$,
\begin{eqnarray} \label{xjgj}
x_{j}(n)&= &\binom{n}{j}  g(dj,dn).
\end{eqnarray}
Using the same arguments as in \cite[Section 5]{C}, we  prove in Appendix that
\begin{eqnarray} \label{xjxj}
\frac{x_j(n)}{x_{j_*}(n)} &=& (1+o(1))\sqrt{\frac{j_*(n-j_*)}{j(n-j)}}  \exp \Big(  n\left[H(j/n)-H(j_*/n)\right] \notag\\
&& \qquad \qquad \qquad  + \left[\log g(dj,dn)-ndF(j/n) \right] - \left[\log g(dj_*,dn)-ndF(j_*/n) \right] \Big),
\end{eqnarray}
and 
\begin{equation} \label{tbtm}
\frac{A_n}{B_n} = \frac{\hat{A}_n }{\hat{B}_n } +o(1/n^2), 
\end{equation}
where
\begin{eqnarray*}
\hat{A}_n&=& \sum_{|j-j_*|\leq n^{5/6}} x_j(n) \\ % \binom{n}{j} e^{2(B+t)j} g(\beta,dj,dn) \\
\hat{B}_{n}&=& \sum_{|j-j_*|\leq n^{5/6}} \exp\left(\frac{r(2j-n)}{n^{3/4}}\right)x_j(n). \\% \binom{n}{j} e^{2(B+t)j} g(\beta,dj,dn)j \\
\end{eqnarray*}
 Observe that when $|j-j_*|\leq n^{5/6}$,
\begin{equation} \label{mjj}
  \sqrt{\frac{j_*(n-j_*)}{j(n-j)}}  = 1+ \kO(|j-j_*|/n)=1+\kO(n^{-1/6}).
  \end{equation}
Lemma \ref{lgF} implies that   for all $j$,
  \begin{equation} \label{gjj}
  \Big| \big[\log g(dj,dn)-ndF(j/n)\big]  - \big[\log g(dj_*,dn)-ndF(j_*/n)\big]\Big| =\kO(|j-j_*|/n).
  \end{equation}
Using  Taylor expansion  and Lemma \ref{lbbc}, we have
\begin{eqnarray*}
H'(j_*/n) &=& H'(\tfrac{1}{2})  +  H''(\tfrac{1}{2}) \left(\frac{j_*}{n}-\frac{1}{2}\right) + H'''(\tfrac{1}{2}) \left(\frac{j_*}{n}-\frac{1}{2}\right)^2 \\
&& + \kO\left( \left(\frac{j_*}{n}-\frac{1}{2}\right)^3 \right)  = \kO(n^{-3}).
\end{eqnarray*} 
  Similarly,
  \begin{eqnarray*}
  H''(j_*/n) = \kO(n^{-2}), \hspace{1 cm}   H'''(j_*/n) = \kO(n^{-1}), \hspace{1 cm} H^{(4)}(j_*/n)= H^{(4)}(\tfrac{1}{2}) + \kO(n^{-1}).
  \end{eqnarray*}
Hence for all $|j-j_*| \leq n^{5/6}$,
  \begin{eqnarray} \label{hjhj}
  H(j/n)-H(j_*/n) &=& H'\left(\frac{j_*}{n}\right) \left(\frac{j-j_*}{n}\right) + H''\left(\frac{j_*}{n}\right) \frac{(j-j_*)^2}{2n^2} + H'''\left(\frac{j_*}{n}\right) \frac{(j-j_*)^3}{6n^3}  \notag \\
  && + H^{(4)}\left(\frac{j_*}{n}\right) \frac{(j-j_*)^4}{24 n^4} + \kO\left( \left(\frac{j-j_*}{n}  \right)^5\right) \notag \\
   &=& \kO \left(n^{-(3+ 1/6)}\right) + \kO \left(n^{-(2+ 1/3)}\right) +\kO \left(n^{-(1+ 1/2)}\right)   \notag \\
  && + \Big(H^{(4)}\left(\tfrac{1}{2}\right) + \kO(n^{-1})\Big) \frac{(j-j_*)^4}{24 n^4} + \kO(n^{-1/6}) \frac{(j-j_*)^4}{24 n^4}  \notag \\
 & = &  \left(1+\kO(n^{-1/6}) \right)\frac{H^{(4)}(\tfrac{1}{2})}{24}  \frac{(j-j_*)^4}{ n^4} + \kO(n^{-3/2}).
  \end{eqnarray}
   We observe that  by Lemma \ref{lbbc}
  \[\alpha_*:= \frac{H^{(4)}(\tfrac{1}{2})}{24} = -\frac{4(d-1)(d-2)}{3d^2} <0.\] 
  Let $\varepsilon >0$ be any given positive real number.   Using \eqref{xjxj}, \eqref{mjj}, \eqref{gjj} and \eqref{hjhj}, we get that for all $n$ large enough and  $|j-j_*| \leq n^{5/6}$,
  \begin{equation} \label{cdjt}
  x_j(n) \leq (1+\varepsilon)\exp\left((\alpha_*+\varepsilon) \frac{(j-j_*)^4}{n^3}  \right) x_{j_*}(n),
  \end{equation}
  and 
  \begin{equation} \label{cdjv}
  x_j(n) \geq (1-\varepsilon)\exp\left((\alpha_*-\varepsilon) \frac{(j-j_*)^4}{n^3}  \right) x_{j_*}(n).
  \end{equation}
  Using \eqref{cdjt} and similar  arguments as in \cite[Section 5]{C}, we can show that 
  \begin{eqnarray}
  \hat{A}_{n} &=& \sum_{|j-j_*| \leq n^{5/6}} x_j(n) \exp \left(  \frac{2r(j-j_*)}{n^{3/4}} \right) \notag \\
  &\leq &(1+\varepsilon)x_{j_*} (n)\sum_{|k| \leq n^{5/6}}\exp \left( \frac{(\alpha_* + \varepsilon) k^4}{n^3} + \frac{2rk}{n^{3/4}}  \right) \notag \\
  &\leq& (1+\varepsilon)x_{j_*} (n)\sum_{k=-\infty}^{\infty}\exp \left( \frac{(\alpha_* + \varepsilon)k^4}{n^3} + \frac{2rk}{n^{3/4}}  \right) \notag \\
  &\leq& (1+ 2\varepsilon)x_{j_*} (n) n^{3/4}  \int_{-\infty}^{\infty} \exp\left((\alpha_* + \varepsilon) x^4 +2r x \right) dx \notag \\
 (y =2 x) \hspace{1 cm}  &=& (1+ 2\varepsilon) \frac{x_{j_*} (n) n^{3/4}}{2}  \int_{-\infty}^{\infty} \exp \left( \frac{(\alpha_* + \varepsilon) y^4}{16} +r y \right) dy. \label{uoa}
  \end{eqnarray}
   Similarly, using \eqref{cdjv} we have
   \begin{eqnarray} \label{loa}
  \hat{A}_{n} \geq (1- 2\varepsilon) \frac{x_{j_*} (n) n^{3/4}}{2}  \int_{-\infty}^{\infty} \exp \left( \frac{(\alpha_* - \varepsilon) y^4}{16} +r y \right) dy.
   \end{eqnarray}
   Using the same arguments for \eqref{uoa} and \eqref{loa}, we can also prove that
\begin{eqnarray} \label{uob}
\hat{B}_{n}  = \sum_{|j-j_*| \leq n^{5/6}} x_j(n) & \leq&(1+ 2\varepsilon) \frac{x_{j_*} (n) n^{3/4}}{2}  \int_{-\infty}^{\infty} \exp \left( \frac{(\alpha_* + \varepsilon) y^4}{16} \right) dy,
\end{eqnarray}  
and 
\begin{eqnarray} \label{lob}
\hat{B}_{n}   & \geq&(1- 2\varepsilon) \frac{x_{j_*} (n) n^{3/4}}{2}  \int_{-\infty}^{\infty} \exp \left( \frac{(\alpha_* - \varepsilon) y^4}{16} \right) dy.
\end{eqnarray}  
Combining \eqref{uoa}, \eqref{loa}, \eqref{uob} and \eqref{lob}, we obtain
\begin{eqnarray} \label{smh}
\left(\frac{1- 2 \varepsilon}{1+ 2 \varepsilon} \right) \frac{\kA(\alpha_*- \varepsilon, r)}{\kB(\alpha_*+ \varepsilon)} \leq  \frac{\hat{A}_n}{\hat{B}_n} \leq \left(\frac{1+ 2 \varepsilon}{1- 2 \varepsilon} \right) \frac{\kA(\alpha_*+ \varepsilon, r)}{\kB(\alpha_*- \varepsilon)}, 
\end{eqnarray}
where 
\begin{equation*}
\kA(\alpha,r)= \int_{-\infty}^{\infty} \exp \left( \frac{\alpha y^4}{16} +r y \right) dy \hspace{1 cm} \textrm{and} \hspace{1 cm} \kB(\alpha,r)= \int_{-\infty}^{\infty} \exp \left( \frac{\alpha y^4}{16}  \right) dy.
\end{equation*}
 We observe that the derivatives with respect to $\alpha$ at $\alpha_*$  of the functions $\kA(\alpha, r)$ and $\kB(\alpha)$ are bounded. Hence, there exists a constant $C$, such that 
 \begin{equation} \label{acb}
 \Big | \frac{\kA(\alpha_* \pm \varepsilon, r)}{\kB(\alpha_* \pm \varepsilon)} - \frac{\kA(\alpha_*, r)}{\kB(\alpha_*)} \Big | \leq C \varepsilon.
 \end{equation}
 On the other hand, 
 \begin{equation} \label{faab}
 \frac{\kA(\alpha_*, r)}{\kB(\alpha_*)}= \E(e^{rX}),
 \end{equation}
where $X$ is a random variable with density proportional to 
\[\exp \left( \frac{\alpha_*}{16}x^4 \right)= \exp \left( \frac{-(d-1)(d-2)x^4}{12d^2} \right).\]
Combining  \eqref{smh}, \eqref{acb} and \eqref{faab}, and letting   $n$ tends to infinity and $\varepsilon$ tend to $0$, we have
\begin{equation*}
\frac{\hat{A}_n}{\hat{B}_n} \longrightarrow \E(e^{rX}).
\end{equation*}
From this convergence and   \eqref{emun}, we can deduce the desired result.
\end{proof}
\section{Appendix:Proof of \eqref{xjxj} and \eqref{tbtm}}
We will repeat some computations in \cite{C} and use Lemma \ref{lgF} to prove these claims.  
\subsection{Proof of \eqref{xjxj}}
 Using  Stirling's formula, we have 
\begin{equation*} \label{bnj}
\binom{n}{j}= \left(\frac{1}{\sqrt{2 \pi}} + o(1) \right)\sqrt{\frac{n}{j(n-j)}} \exp \left( n I \left(\frac{j}{n}\right) \right),
\end{equation*}
with 
$$I(t)=(t-1) \log (1-t) -t \log t.$$
Therefore using \eqref{xjgj}, we get  
  \begin{eqnarray*} \label{tjj}
  \frac{x_j(n)}{x_{j_*}(n)}&=& (1+o(1)) \sqrt{\frac{j_*(n-j_*)}{j(n-j)} } \exp  \left( n \left[I \left(\frac{j}{n}\right) -  I \left(\frac{j_*}{n}\right)\right] + \log g(dj,dn) - \log g(dj_*,dn) \right)  \notag \\
&=& (1+o(1))\sqrt{\frac{j_*(n-j_*)}{j(n-j)} } \exp \left(  n \left[I \left(\frac{j}{n}\right) -  dF \left(\frac{j}{n}\right)\right] -  n \left[I \left(\frac{j_*}{n}\right) - d F \left(\frac{j_*}{n}\right)\right] \right.  \notag\\
&& \qquad \qquad \qquad \left.   + \left[\log g(dj,dn)-ndF \left(\frac{j}{n}\right) \right] - \left[\log g(dj_*,dn)-ndF \left(\frac{j_*}{n}\right) \right]  \right) \notag \\
&=& (1+o(1))\sqrt{\frac{j_*(n-j_*)}{j(n-j)} } \exp \left(  n \left[H \left(\frac{j}{n}\right) -  H \left(\frac{j_*}{n}\right) \right] \right.  \notag\\
&& \qquad \qquad \qquad \left.   + \left[\log g(dj,dn)-ndF \left(\frac{j}{n}\right) \right] - \left[\log g(dj_*,dn)-ndF \left(\frac{j_*}{n}\right) \right]  \right), \notag 
  \end{eqnarray*}
  which yields \eqref{xjxj}.
  \subsection{Proof of \eqref{tbtm}}  Since $H(t)$ attains the maximum at a unique point $\tfrac{1}{2}$, 
there exists a positive constant $\varepsilon $, such that  for all $\delta \leq \varepsilon$,
\[\max_{|t-\tfrac{1}{2}|\geq \delta} H(t) = \max \{H(\tfrac{1}{2} \pm \delta )\}.\]

\noindent Hence  for  $n$ large enough (such that $n^{-1/6} \leq \varepsilon$), we have for all $|j-j_*| > n^{5/6}$,
 \begin{equation} \label{sach}
 H(j/n)-H(j_*/n) \leq \max \{H(j_*/n \pm n^{-1/6})- H(j_*/n) \}.
\end{equation}  
Using the same arguments for \eqref{hjhj}, we can prove that
\begin{eqnarray*}
H(j_*/n \pm n^{-1/6})- H(j_*/n) =   \alpha_* n^{-2/3} + o(n^{-2/3}). 
\end{eqnarray*}
 Therefore
\begin{equation} \label{nljj}
n \big(H(j_*/n \pm n^{-1/6})- H(j_*/n)\big ) = \alpha_* n^{1/3} +o(n^{1/3}).
\end{equation}
Using $\alpha_*$ and     \eqref{sach}, \eqref{nljj}, we have   for $n$ large enough and $|j-j_*| > n^{5/6}$,  
\begin{equation} \label{nlj}
n \big(H(j/n)-H(j_*/n)\big) \leq   \frac{\alpha_* n^{1/3}}{2}.
\end{equation}
On the other hand, for all $j$
\begin{equation} \label{jsj}
\sqrt{\frac{j_*(n-j_*)}{j(n-j)}}  \leq \sqrt{n}.
\end{equation}
It follows from  \eqref{xjxj},  \eqref{gjj}, \eqref{nlj} and  \eqref{jsj} that  for $n$ large enough and  $|j-j_*| > n^{5/6}$, 
\begin{equation*}
x_j (n) \leq x_{j_*} (n) \sqrt{n} \exp \left(\frac{\alpha_* n^{1/3}}{2} \right)  \leq x_{j_*} (n) n^{-6},
\end{equation*}
 since $\alpha_*<0$.  Therefore
 \begin{equation} \label{ban}
 \bar{A}_n:=\sum_{|j-j_*| > n^{5/6}} x_j(n) \leq x_{j_*}(n) n^{-5},
 \end{equation}
 here we recall that $x_j(n)=0$ for all $j<0$ or $j >n$. Similarly,  for $n$ large enough and  $|j-j_*|> n^{5/6}$,
 \begin{eqnarray*}
 \exp\left(\frac{r(2j-n)}{n^{3/4}}\right)x_j(n) &\leq&  x_{j_*}(n) \sqrt{n} \exp \left(\frac{|r(2j-n)|}{n^{3/4}}\right) \exp \left(\frac{\alpha_*  n^{1/3}}{2}\right) \notag \\
 &\leq & x_{j_*}(n) \sqrt{n} \exp \left(|r| n^{1/4}+ \frac{\alpha_*  n^{1/3}}{2}\right) \notag \\
 & \leq &  x_{j_*}(n) n^{-6}.
 \end{eqnarray*}
 Hence 
 \begin{equation} \label{bbn}
  \bar{B}_n:= \sum_{|j-j_*| > n^{5/6}} \exp\left(\frac{r(2j-n)}{n^{3/4}}\right)x_j(n)  \leq x_{j_*}(n) n^{-5}.
 \end{equation}
Since all the terms $(x_j(n))$ are non negative,
\begin{equation} \label{loan}
\hat{A}_n=\sum_{|j-j_*| \leq n^{5/6}} x_j (n) \geq  x_{j_*} (n),
\end{equation}
and 
\begin{eqnarray} \label{lobn}
\hat{B}_n = \sum_{|j-j_*| \leq n^{5/6}} \exp\left(\frac{r(2j-n)}{n^{3/4}}\right)x_j (n) &\geq&   \exp\left(\frac{r(2j_*-n)}{n^{3/4}}\right)x_{j_*} (n) \notag \\
& \geq &  \exp \left(-\frac{|r|}{n^{3/4}} \right) x_{j_*}(n) \geq \frac{x_{j_*}(n)}{2},
\end{eqnarray}
for $n$ large enough and $r$ fixed.   Finally, combining \eqref{ban}, \eqref{bbn}, \eqref{loan} and \eqref{lobn} yields that
\begin{eqnarray*}
\frac{A_n}{B_n} = \frac{\hat{A}_n  + \bar{A}_n}{\hat{B}_n  + \bar{B}_n} =  \frac{\hat{A}_n}{\hat{B}_n } + o(n^{-2}).
\end{eqnarray*}

 \begin{ack} \emph{  We would like to thank the anonymous referees for their carefully reading and their valuable comments. This work is supported by the Vietnam National Foundation for Science and Technology Development (NAFOSTED) under  Grant number 101.03--2017.01. }
 \end{ack}

\end{document}